\documentclass{amsart}

\usepackage[all]{xy}
\usepackage[normalem]{ulem}
\usepackage{amsmath,amssymb}
\usepackage{amsthm}
\usepackage{enumerate}
\usepackage{url}
\usepackage{graphicx}
\usepackage{esvect}
\usepackage{mathrsfs}

\usepackage{tikz}

\usepackage{caption}
\usepackage{hyperref}

\makeatletter
\def\@settitle{%
  \begin{center}%
    \baselineskip 14\p@
    \normalfont\bfseries\large 
    \@title
  \end{center}%
}

\usepackage{etoolbox}

\makeatletter
\patchcmd{\@setauthors}{\MakeUppercase}{\relax}{}{}
\makeatother

\makeatletter
\def\section{\@startsection{section}{1}{\z@}%
  {-2.5ex \@plus -1ex \@minus -.2ex}%
  {1.5ex \@plus .2ex}%
  {\normalfont\bfseries\centering}}
\makeatother

\theoremstyle{plain}
\newtheorem{theorem}{Theorem}[section]
\newtheorem{corollary}[theorem]{Corollary}
\newtheorem{lemma}[theorem]{Lemma}
\newtheorem{proposition}[theorem]{Proposition}

\theoremstyle{definition}
\newtheorem{definition}[theorem]{Definition}
\newtheorem{example}[theorem]{Example}
\newtheorem{remark}[theorem]{Remark}

\newtheorem*{thm*}{Theorem}

\newcommand{\ra}{\longrightarrow}
\def\co{\colon\thinspace}

\newcommand{\id}{\mathrm{id}}

\newcommand{\Id}{\mathrm{Id}}
\newcommand{\Lk}{\mathrm{Lk}}
\newcommand{\St}{\mathrm{St}}
\newcommand{\lk}{\mathrm{lk}}
\newcommand{\st}{\mathrm{st}}
\newcommand{\sd}{\mathrm{sd}}
\newcommand{\T}{\mathrm{T}}
\newcommand{\M}{\mathrm{M}}
\newcommand{\trh}{\mathrm{trh}}

\newcommand{\vep}{\varepsilon}
\newcommand{\N}{\mathbb{N}}
\newcommand{\R}{\mathbb{R}}

\newcommand{\Top}{\mathbf{Top}}

\newcommand{\hyphenmod}{{\operatorname{-Mod}}}
\newcommand{\Pos}{\mathbf{Pos}}
\newcommand{\Ho}{\mathrm{Ho}}

\newcommand{\bs}{\backslash}

\newcommand{\B}{\mathcal{B}}
\newcommand{\K}{\mathcal{K}}
\newcommand{\C}{\mathscr{C}}
\newcommand{\D}{\mathscr{D}}
\newcommand{\di}{d_{\mathrm{I}}}
\newcommand{\dhc}{d_{\mathrm{HC}}}

 \makeatletter
    
    \@addtoreset{equation}{section}
  \makeatother

\subjclass[2020]{}
\keywords{Quillen's Theorem A; persistence finite posets, interleaving distance}

\author{Vitalii Guzeev}
\address{Moscow Center for Continuous Mathematical Education, Bolshoy Vlasjevskiy 11, Moscow, Russia.}
\email{viviag@yandex.ru}

\author{Kohei Tanaka}
\address{Institute of Social Sciences, School of Humanities and Social Sciences, Academic Assembly, Shinshu University, 3-3-1 Asahi, Matsumoto, Nagano 390-8621, Japan.}
\email{tanaka@shinshu-u.ac.jp}

\title{Quillen-McCord theorem for persistence finite posets}

\begin{document}

\begin{abstract}
In this paper, we establish a persistence version of the Quillen-McCord theorem for persistence finite posets. 
Given a map $f \co P \to Q$ between persistence finite posets $P$ and $Q$ with weakly $\vep$-contractible homotopy fibers, we provide an upper bound for the homotopy commutative interleaving distance between $P$ and $Q$.
\end{abstract}

\maketitle

%%%%%%%%%%%%%%%%%%%%%%%%%%%%%%%%%%%
\section{Introduction}\label{sec:intro} % Sec 1
%%%%%%%%%%%%%%%%%%%%%%%%%%%%%%%%%%%

For a continuous map $f \co X \to Y$, studying its homotopy fiber $F_f$ is essential for understanding the relationship between $X$ and $Y$. 
In particular, if $F_f$ is weakly contractible (homotopically trivial), then it follows that $f$ is a weak homotopy equivalence. 

From a combinatorial viewpoint, the version of this statement applied to (finite) posets is known as the Quillen–McCord theorem (or Quillen fiber lemma) \cite{McC66, Qui72}.
When a finite poset is regarded as a finite $T_0$-space whose open sets are upper sets, the Quillen–McCord theorem can be described as follows since the McCord weak homotopy equivalence $\B(P) \to P$ for a finite poset $P$ and the classifying space $\B(P)$ \cite{McC66}.

\begin{theorem}[Quillen-McCord's theorem \cite{McC66, Qui72}]\label{th:QM}
Let $f \co P \to Q$ be an order preserving map between finite posets.
Assume that one of the following conditions holds:
\begin{enumerate}
\item $f^{-1}(L_v)$ is weakly contractible for each $v \in Q$, where $L_v = \{x \in Q \mid x \leq v\}$.
\item $f^{-1}(U_v)$ is weakly contractible for each $v \in Q$, where $U_v = \{x \in Q \mid x \geq v\}$.
\end{enumerate}
Then $f$ is a weak homotopy equivalence.
\end{theorem}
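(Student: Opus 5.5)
We plan to reduce the statement to Quillen's Theorem A for the classifying spaces and then pass back to the finite spaces through McCord's theorem. A poset $P$ is a small category, and $\B(P)$ is the geometric realization of its order complex. McCord's map $\mu_P \co \B(P) \to P$ is a weak homotopy equivalence, and it is natural in order preserving maps: $f \circ \mu_P = \mu_Q \circ \B(f)$. By the two-out-of-three property for weak equivalences, it therefore suffices to show that $\B(f) \co \B(P) \to \B(Q)$ is a homotopy equivalence. Since these are CW complexes, being a weak equivalence is enough.

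For condition (1), we identify the comma category $f/v$ of Theorem A with a subposet. Its objects are the $x \in P$ with $f(x) \le v$, and a morphism $x \to x'$ exists (uniquely) exactly when $x \le x'$. So $f/v$ is the subposet $f^{-1}(L_v)$. By hypothesis this subposet is weakly contractible as a finite space. Applying McCord's theorem to it, $\B(f^{-1}(L_v))$ is weakly contractible, and hence contractible, since it is a finite CW complex. Quillen's Theorem A then says $\B(f)$ is a homotopy equivalence.

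Condition (2) is the dual case. We apply Theorem A to $f^{\mathrm{op}} \co P^{\mathrm{op}} \to Q^{\mathrm{op}}$; equivalently we use the version of Theorem A with the comma categories $v/f$, which are the subposets $f^{-1}(U_v)$. We also use that $\B(P^{\mathrm{op}}) = \B(P)$, because the order complex does not change when the order is reversed. Here there is one subtlety: weak contractibility of $f^{-1}(U_v)$ as a finite space refers to the topology of $P$ restricted to it. McCord's theorem gives $\B(f^{-1}(U_v)) \simeq f^{-1}(U_v)$ with that topology, and the classifying space ignores orientation, so the hypothesis transfers correctly.

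The main obstacle is this bookkeeping of conventions: that the comma categories are exactly the stated subposets, with the direction matching the chosen variant of Theorem A, and that McCord's map is natural. The naturality is immediate from its formula, which sends a point in the open simplex spanned by the chain $x_0 < \dots < x_k$ to $x_0$ (or to $x_k$, depending on whether open sets are taken to be up-sets or down-sets). With the paper's convention that open sets are upper sets, we must choose the endpoint that makes $\mu$ continuous. Naturality then holds because $f$ sends chains to chains and preserves the chosen endpoint.

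A self-contained alternative for Theorem A in the poset setting is to prove it by induction on $|Q|$ using the gluing lemma. One adds a maximal point $v$ of $Q$ and writes $\B(Q)$ as the union of the cone on $\B(L_v)$ with $\B(Q \setminus \{v\})$. The contractibility of $\B(f^{-1}(L_v))$ then lets the homotopy equivalences be glued.
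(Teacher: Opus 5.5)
Your proof is correct, but it follows a different route from the paper's. The paper does not prove this statement itself; it is quoted from McCord and Quillen. The relevant comparison is therefore with the method the paper uses for its persistence analogue, Theorem~\ref{th:main}, which at $\vep=0$ is essentially Barmak's mapping-cylinder proof.

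That proof runs as follows:
\begin{enumerate}
\item Form $\M_f$ and use $r$ together with the inclusion of $Q$ to get $\M_f\simeq Q$ (Lemma~\ref{lem:MfQ}).
\item Delete the points of $Q$ from $\M_f$ one at a time, always choosing a minimal remaining point $v$.
\item For such a $v$ one has $\hat L_v=f^{-1}(L_v)$. This makes the link $\Lk(v)=\B(\hat L_v)\star\B(\hat U_v)$ contractible, so the star deformation retracts onto the link (Corollary~\ref{cor:simplicial_app} and Proposition~\ref{prop:ep-reducible} at $\vep=0$). Hence each deletion is an equivalence.
\item After $|Q|$ steps you reach $P$, and $f=r\circ i$ with $i\colon P\hookrightarrow\M_f$.
\end{enumerate}

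You instead invoke Quillen's Theorem A as a black box for $\B(f)$, identifying $f/v$ and $v/f$ with $f^{-1}(L_v)$ and $f^{-1}(U_v)$. You then transport the conclusion back through the natural McCord map by two-out-of-three.

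Your route is shorter and conceptually cleaner, but it imports Theorem A wholesale. The paper's route is elementary and purely local. More to the point for this paper, it factors the equivalence into $|Q|$ one-point moves, each of which can be made quantitative (costing $2\vep$ in the persistence setting). That is exactly what yields the bound $2\vep|Q|$; Theorem A gives no such stepwise control.

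Your ``self-contained alternative'' is closer in spirit to the paper, but as sketched it needs two fixes.
\begin{itemize}
\item The piece to glue in is $\B(L_v)$, which is the cone on $\B(\hat L_v)$, not the cone on $\B(L_v)$.
\item The gluing lemma also needs the map on the intersection, $\B(f^{-1}(\hat L_v))\to\B(\hat L_v)$, to be an equivalence. This comes from the induction hypothesis applied to the restriction $f^{-1}(\hat L_v)\to\hat L_v$, not from contractibility of $\B(f^{-1}(L_v))$ alone.
\end{itemize}
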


The following result is also known as a homological version of the above theorem.

\begin{theorem}[Homological Quillen-McCord's theorem \cite{Bar11a}]\label{th:HQM}
Let $R$ be a PID and let $H_*$ denote the homology with coefficients in $R$. 
For an order preserving map $f \co P \to Q$ between finite posets $P$ and $Q$, we assume that one of the following conditions holds:
\begin{enumerate}
\item $f^{-1}(L_v)$ is acyclic, i.e. $H_*(f^{-1}(L_v))$ is trivial for each $v \in Q$.
\item $f^{-1}(U_v)$ is acyclic, i.e. $H_*(f^{-1}(U_v))$ is trivial for each $v \in Q$.
\end{enumerate}
Then $f$ induces an isomorphism $H_*(P) \cong H_*(Q)$.
\end{theorem}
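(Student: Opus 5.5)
We treat the homological Quillen-McCord theorem (Theorem~\ref{th:HQM}) as a comparison of two spectral sequences, or more elementarily as an induction over the points of $Q$. We give the argument under condition (1); condition (2) follows by applying (1) to the opposite posets $P^{op}$ and $Q^{op}$. This works because $\B(P^{op}) = \B(P)$, so reversing orders changes neither homology nor the map $\B(f)$, and $L_v$ in $Q^{op}$ is $U_v$ in $Q$. By McCord's weak equivalence $\B(P) \to P$ it suffices to show that $\B(f)\co \B(P) \to \B(Q)$ induces an isomorphism on $H_*(-;R)$, i.e. on simplicial homology of order complexes.

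The inductive plan is as follows. Order the points of $Q$ by a linear extension $v_1, \dots, v_n$, so that each $Q_k = \{v_1,\dots,v_k\}$ is a down-set. Put $P_k = f^{-1}(Q_k)$, which is a down-set of $P$. We prove by induction on $k$ that $f|\co P_k \to Q_k$ is a homology isomorphism; for $k=0$ both sides are empty. For the inductive step, write $v = v_k$. Then $\B(Q_k) = \B(Q_{k-1}) \cup \B(L_v)$ with $\B(Q_{k-1}) \cap \B(L_v) = \B(L_v \setminus \{v\})$, since every chain of $Q_k$ containing $v$ lies in $L_v$ ($v$ is maximal in $Q_k$). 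Similarly $\B(P_k) = \B(P_{k-1}) \cup \B(f^{-1}(L_v))$, with intersection $\B(f^{-1}(L_v \setminus\{v\}))$. This is where it matters that $P_k$ is a down-set and $f^{-1}(L_v)$ is a down-set, so that a chain not contained in $P_{k-1}$ has its top in $f^{-1}(v)$ and hence lies entirely in $f^{-1}(L_v)$. The map $f$ respects both decompositions, so we get a map of Mayer–Vietoris long exact sequences, and we finish with the five lemma.

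It remains to check the three comparison maps. For $f^{-1}(L_v) \to L_v$, both sides are acyclic: the source by hypothesis and the target because it has a maximum. The map $P_{k-1} \to Q_{k-1}$ is an isomorphism by induction. For $f^{-1}(L_v\setminus\{v\}) \to L_v\setminus\{v\}$, note that $L_v \setminus\{v\}$ is a down-set of $Q$ with fewer points than $Q_k$. So we should strengthen the induction to the statement that \emph{for every down-set $D \subseteq Q$, $f^{-1}(D)\to D$ is a homology isomorphism}, proved by induction on $|D|$. The hypothesis on fibers $f^{-1}(L_w)$ for $w\in D$ restricts correctly, since $L_w \subseteq D$. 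With this strengthening, all three auxiliary maps are isomorphisms, and the five lemma (using reduced homology, or unreduced with care in degree $0$) gives the result for $D$.

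The main obstacle is the bookkeeping in the decomposition of order complexes. We must verify that $\B(f^{-1}(D))$ is exactly the union $\B(f^{-1}(D\setminus\{v\})) \cup \B(f^{-1}(L_v))$ for $v$ maximal in $D$, with the stated intersection, and that these are subcomplexes so that simplicial Mayer–Vietoris applies. The PID hypothesis is not really needed for this argument; it only matters if one instead derives the result from a Leray/Vietoris–Begle spectral sequence with a universal coefficient step. The Mayer–Vietoris induction avoids that.
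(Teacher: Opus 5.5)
Your argument is correct, but it takes a different route from the one the paper relies on. The paper gives no proof of Theorem~\ref{th:HQM}; it quotes it from \cite{Bar11a}. The method it takes from there, and adapts in Theorem~\ref{th:main}, is the mapping-cylinder reduction. One embeds $P$ and $Q$ in $\M_f$, which is equivalent to $Q$ via $r$ (Lemma~\ref{lem:MfQ}), and deletes the points of $Q$ one at a time, minimal ones first. At each stage the deleted point $v$ satisfies $\hat{L}_v=f^{-1}(L_v)$. Deleting a point with acyclic $\hat{L}_v$ is a homology isomorphism by Mayer--Vietoris for $\K(X)=\K(X\setminus v)\cup\st(v)$: $\st(v)$ is a cone, and the link $\K(\hat{L}_v)\star\K(\hat{U}_v)$ is acyclic as a join with an acyclic factor. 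When all of $Q$ is gone one is left with $P$, and $f=r\circ(P\hookrightarrow\M_f)$.

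You never leave $P$ and $Q$. You cut $\K(D)$ and $\K(f^{-1}(D))$ compatibly along a maximal $v\in D$ and compare the two Mayer--Vietoris sequences; your check that a chain meeting $f^{-1}(v)$ lies in $f^{-1}(L_v)$ is exactly right. The hypothesis is used on the piece $f^{-1}(L_v)$. The intersection map $f^{-1}(\hat{L}_v)\to\hat{L}_v$ is not controlled by any hypothesis and must come from induction, which is why your strengthening to all down-sets is essential, as you saw.

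What your route buys: it is self-contained, needs neither the cylinder nor any fact about the homology of joins, and works for arbitrary coefficients. What the cylinder route buys: it replaces the comparison by a linear chain of $|Q|$ elementary deletions from $\M_f$ down to $P$, each of which also works at the level of spaces. The paper exploits exactly this in Proposition~\ref{prop:ep-reducible}, pricing each deletion at $2\vep$ and adding the costs via Proposition~\ref{prop:interleaving}(2) to get $2\vep|Q|$. A five-lemma induction that branches through both $D\setminus\{v\}$ and $\hat{L}_v$ is much harder to make quantitative in that way.
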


On the other hand, in the framework of TDA (Topological Data Analysis), one considers a sequence of spaces indexed by a numeric parameter and computes its persistence homologies.

One established technique of building a sequence of spaces is to start with a collection of data equipped with numeric indexes and consider spaces built from the said collection filtered by different threshold values of index (see \cite{SG07, KCM+21} for examples). 
In this technique, resulting object is naturally a filtration. The other known approach involves indexing by time (\cite{SSG+18}).

In either case, researchers face two problems.
	
\begin{enumerate}
		\item How to measure proximity of results of different experiments and when to consider them essentially equivalent.
		\item How to actually compute the results: computation of invariants like persistent homology heavily depends on the data size, for coefficients in a field specifically on dimension of spaces, and may have unacceptable costs.
	\end{enumerate}
    
    It seems possible that a persistence version of a Quillen-McCord theorem is useful for resolution of both problems.
    Specifically, given a suitable persistence version of a theorem one could design an algorithm to reduce dimension of persistence homology at the cost of sharpness of the result (\cite{Ayz, Guz}).

One of the tools used to compare results, namely persistent homology, is the interleaving distance denoted by $\di(M,N)$ for persistence modules $M,N$.
As a persistence version of the homological Quillen-McCord theorem (Theorem \ref{th:HQM}), the previous work \cite{Guz} provides an estimate on the interleaving distance of the associated persistent homologies $H_*(P)$ and $H_*(Q)$ for a map $f \co P \to Q$ with  $\vep$-acyclic homotopy fibers.

\begin{theorem}[Persistence homological Quillen-McCord's theorem \cite{Guz}]\label{th:Guz}
For a map $f \co P \to Q$ between suitable persistence finite posets $P$ and $Q$, we assume that one of the following conditions holds:
\begin{enumerate}
\item $f^{-1}(L_v)$ is $\vep$-acyclic for each persistence point $v$ in $Q$.
\item $f^{-1}(U_v)$ is $\vep$-acyclic for each persistence point $v$ in $Q$.
\end{enumerate}
Then we have the following inequality for each $i$:
\[
\di(H_i(P), H_i(Q)) \leq 4 \vep |Q|,
\]
where $|Q|$ denotes the number of persistence points in $Q$.
\end{theorem}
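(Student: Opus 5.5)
We argue by induction on the number $|Q|$ of persistence points of $Q$, removing one point of $Q$ at a time; each removal will cost at most $4\vep$ in interleaving distance. We treat condition (1); condition (2) follows by applying the same argument to the opposite posets, since $(U_v)^{\mathrm{op}} = L_v$ in $Q^{\mathrm{op}}$ and homology is unchanged under passing to opposites.

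First we order the persistence points of $Q$ as $v_1, \dots, v_n$ by a linear extension, chosen compatibly with the persistence parameter so that the downsets $Q_k = \{v_1,\dots,v_k\}$ are sub-persistence posets. Set $P_k = f^{-1}(Q_k)$. Then $P_n = P$ and $Q_n = Q$. Both $P_0$ and $Q_0$ are empty, so $\di(H_i(P_0),H_i(Q_0))=0$.

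At step $k$ we compare the pairs $(P_{k-1},Q_{k-1})$ and $(P_k,Q_k)$ through the Mayer--Vietoris decompositions
\[
Q_k = Q_{k-1} \cup L_{v_k}, \qquad P_k = P_{k-1} \cup f^{-1}(L_{v_k}).
\]
The intersections are $Q_{k-1}\cap L_{v_k} = \hat L_{v_k}$, the strict downset of $v_k$, and its preimage. As an ordinary poset $L_{v_k}$ is contractible, and by hypothesis $f^{-1}(L_{v_k})$ is $\vep$-acyclic. Consequently the map $H_*(f^{-1}(L_{v_k}))\to H_*(L_{v_k})$ is an $\vep$-isomorphism in the sense of persistence modules.

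The map $f$ induces a morphism between the two Mayer--Vietoris long exact sequences. We then apply a persistence five lemma: if two of every three maps in a morphism of long exact sequences of persistence modules are $\delta$-interleaving (or $\delta$-isomorphisms), then the remaining map is a $c\delta$-isomorphism, with $c$ a small constant. This shows that $H_*(P_k)\to H_*(Q_k)$ is off from an isomorphism by at most the error on $P_{k-1}$ plus $4\vep$.

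To make this an induction, we strengthen the statement. It is not enough to control $\di$ at each step; we claim that $f_*\colon H_i(P_k)\to H_i(Q_k)$ has kernel and cokernel that are $4\vep k$-trivial, so that $f_*$ is a $4\vep k$-isomorphism. The term $\hat L_{v_k}$ is itself a union of downsets of earlier points, so its preimage is handled by the inductive hypothesis applied inside $Q_{k-1}$. Taking $k=n=|Q|$ and using that a $\delta$-isomorphism yields a $\delta$-interleaving gives $\di(H_i(P),H_i(Q))\le 4\vep|Q|$.

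The main obstacle is the persistence five lemma step, for two reasons.
\begin{itemize}
\item Errors must add, $\delta_1+\delta_2$, rather than multiply, and the constant per step must be exactly $4\vep$. We would first try the standard diagram chase with shifts: a diagram chase with shifts costs $\vep$ for each of the four maps traversed.
\item The inductive hypothesis must apply to the intersection term $\hat L_{v_k}$, which is not of the form $Q_j$. We would try proving the stronger statement for all sub-persistence downsets of $Q$ simultaneously, by induction on their size, which absorbs the intersection term.
\end{itemize}
We also need that "suitable" persistence posets make $L_v$ a sub-persistence poset contractible at each parameter, so that the Mayer--Vietoris sequence is natural in the parameter.
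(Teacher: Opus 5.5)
\textbf{There is a genuine gap: the induction does not give a bound linear in $|Q|$.} The inductive step does not yield ``the error on $P_{k-1}$ plus $4\vep$''.

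In the Mayer--Vietoris ladder for $Q_k=Q_{k-1}\cup L_{v_k}$, one vertical map is the map on the intersection term, $H_*(f^{-1}(\hat L_{v_k}))\to H_*(\hat L_{v_k})$. This term occupies two of the four flanking positions around the middle map. Nothing in the hypothesis controls it, because $\hat L_{v_k}$ is a general downset. Its error is an accumulated one: under your strengthened induction it is up to $4\vep|\hat L_{v_k}|$.

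A persistence five lemma adds the shifts of the flanking maps. Already for a short exact ladder, you can only conclude that a kernel element dies after the sum of the two shifts. So what you actually get is a recursion of the form $e(D)\le e(D\setminus v)+e(\hat L_v)+O(\vep)$, with two accumulated errors on the right.

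This blows up. Take $Q$ built in layers of two incomparable points, each point above everything in the lower layers. Every maximal point of the first $j$ layers has $\hat L_v$ equal to the first $j-1$ layers. Writing $E_j$ for the bound on the first $j$ layers, the recursion gives $E_j\approx 3E_{j-1}$, a bound exponential in $|Q|$. So the obstacle in your second bullet is not absorbed by inducting over all downsets; it is exactly where linearity is lost.

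\textbf{The missing idea is the mapping-cylinder reduction} used in \cite{Guz} and in this paper (see the proof of Theorem \ref{th:main}). Never compare $f^{-1}(D)$ with $D$ through a ladder.

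\begin{enumerate}
\item Replace $Q$ by $\M_f$, which is $0$-interleaved with $Q$ (Lemma \ref{lem:MfQ}).
\item Delete the points of $Q$ from $\M_f$ one at a time, always a minimal remaining one. When $v$ is deleted, every point of $Q$ below $v$ is already gone. Hence $\hat L_v$ in the current poset is exactly $f^{-1}(L_v)$, which the hypothesis controls directly with no accumulated error.
\item Compare each deletion with the previous stage through the inclusion $X\setminus v\hookrightarrow X$, using $X=(X\setminus v)\cup\St(v)$ with $\St(v)$ a cone on $\Lk(v)=\B(\hat L_v)\star\B(\hat U_v)$. Each step then costs a fixed multiple of $\vep$, independent of earlier steps. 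At the space level this is $2\vep$ (Proposition \ref{prop:ep-reducible}); the homological version of this step is what produces $4\vep$ per point in \cite{Guz}.
\item After $|Q|$ deletions you reach $P$, and the per-step costs add linearly by the triangle inequality.
\end{enumerate}

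\textbf{A smaller point.} An $\vep$-interleaving of $H_*(f^{-1}(L_v))$ with a point module only makes its reduced homology $2\vep$-trivial. Its threshold can also exceed that of $v$. So your claim that $H_*(f^{-1}(L_v))\to H_*(L_v)$ is an $\vep$-isomorphism needs more than you state.
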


The aim of this paper is to establish a persistence version of the Quillen-McCord theorem (Theorem \ref{th:QM}). 
The primary focus is on comparisons at the level of persistence spaces, prior to passing to homology.

For persistence spaces, several notions of interleaving distance up to homotopy are known \cite{BL23, LS23}.
A natural one is the homotopy commutative interleaving distance $\dhc$, defined using interleavings in the homotopy category of topological spaces.

\begin{theorem}[Persistence Quillen-McCord's theorem (Theorem \ref{th:main})]
For a map $f \co P \to Q$ between suitable persistence finite posets $P$ and $Q$, we assume that one of the following conditions holds:
\begin{enumerate}
\item $f^{-1}(L_v)$ is weakly $\vep$-contractible for each persistence point $v$ in $Q$.
\item $f^{-1}(U_v)$ is weakly $\vep$-contractible for each persistence point $v$ in $Q$.
\end{enumerate}
Then we have the following inequality:
\[
\dhc(P, Q) \leq 2 \vep |Q|.
\]
\end{theorem}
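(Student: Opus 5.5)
We argue by induction on the number $|Q|$ of persistence points of $Q$, following the classical proof of Theorem \ref{th:QM} via removal of points, and mirroring the strategy of \cite{Guz} for Theorem \ref{th:Guz}. We treat condition (1); condition (2) follows by passing to opposite posets, since $U_v$ in $Q$ is $L_v$ in $Q^{\mathrm{op}}$ and classifying spaces are unchanged. The model is the standard argument: choose a persistence point $v$ of $Q$ that is maximal, with birth time chosen appropriately. Put $Q' = Q \setminus \{v\}$ and $P' = f^{-1}(Q')$, and let $f' \co P' \to Q'$ be the restriction. For $w \in Q'$, the preimage $f'^{-1}(L_w)$ coincides with $f^{-1}(L_w)$, since $v \not\le w$. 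Hence $f'$ satisfies the hypothesis with the same $\vep$, and induction gives $\dhc(P',Q') \le 2\vep(|Q|-1)$. By the triangle inequality for $\dhc$, it then suffices to prove
\[
\dhc(P,P') \le \vep \quad\text{and}\quad \dhc(Q,Q') \le \vep ,
\]
or at least that their sum is at most $2\vep$.

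For $Q$ versus $Q'$: at each parameter $t$, $Q_t$ is obtained from $Q'_t$ by attaching the cone on the strict lower link $\hat L_v$ (the classifying space of $L_v$ being a cone with apex $v$). For $P$ versus $P'$: $P_t$ is obtained from $P'_t$ by gluing $f^{-1}(L_v)_t$ along $f^{-1}(\hat L_v)_t$. Weak $\vep$-contractibility of $f^{-1}(L_v)$ says that $f^{-1}(L_v)_t \to f^{-1}(L_v)_{t+\vep}$ is null-homotopic. We use this null-homotopy to build, up to homotopy, maps $P_t \to P'_{t+\vep}$ over a cone. More precisely, we compare $P_t$ with the mapping cone $P'_t \cup C(f^{-1}(\hat L_v)_t)$. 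The pushout square of finite spaces (McCord) together with the null-homotopy produces homotopy-commutative interleaving maps
\[
P_t \to P'_{t+\vep}\cup C(\cdots), \qquad P'_t \cup C(\cdots) \to P_{t+\vep}.
\]
Comparing these with $Q_t = Q'_t \cup C(\hat L_v)_t$ by the same cone-attachment reasoning yields $2\vep$ per removed point. This gives the per-step budget of $2\vep$; the paper's bound $2\vep|Q|$ suggests the step should cost $2\vep$ in total, for instance $\vep$ from realizing the null-homotopy forward and $\vep$ back. The base case is $|Q|=0$, i.e.\ $Q=\emptyset$; then $P=\emptyset$ and the distance is $0$.

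The main obstacle is \emph{naturality in $t$}. A homotopy-commutative interleaving requires the maps $P_t \to (\cdot)_{t+\vep}$ to commute up to homotopy with the structure maps, yet the null-homotopies of $f^{-1}(L_v)_t \to f^{-1}(L_v)_{t+\vep}$ are chosen separately for each $t$. We would handle this by working in the homotopy category throughout, using only that the homotopy class of a map out of a cone is determined by the null-homotopy class. Where there is still ambiguity, we would restrict to the ``suitable'' persistence posets of the paper, whose points have birth times in a discrete set. There finitely many critical values allow the null-homotopies to be chosen compatibly by induction along the parameter.

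A second delicate point is the order of removal. The point $v$ must be maximal in $Q_t$ for every $t$ at which it exists, so that the fibers $f^{-1}(L_w)$ are unchanged for $w \in Q'$. We would order the points by birth time and, among points with equal birth time, take a maximal one.
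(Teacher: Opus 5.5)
Your reduction step fails. After deleting a maximal point $v$, you propose to bound $\dhc(P,P')$ and $\dhc(Q,Q')$ separately. The hypothesis controls neither. Whether deleting $v$ preserves homotopy type depends on $\hat L_v$, not on $f^{-1}(L_v)$, and the deletion can change the homotopy type completely.

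For instance, take $Q$ constant equal to the four-point circle $\{a,b<c,d\}$, with $P=Q$, $f=\id$ and $\vep=0$. Every $L_w$ has a maximum, so condition (1) holds. But $Q'=Q\setminus c=\{a,b<d\}$ is contractible, so $\dhc(P,P')=\dhc(Q,Q')=\infty$ (already $H_1$ obstructs).

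The classical point-removal proof never compares $P$ with $P'$. It compares the pushouts $P=P'\cup f^{-1}(L_v)$ and $Q=Q'\cup L_v$, glued along $f^{-1}(\hat L_v)$ and $\hat L_v$, via a gluing lemma; your mapping-cone paragraph gestures at this. That step does not go through for homotopy commutative interleavings. You would need interleavings for $P'\leftrightarrow Q'$ and for $f^{-1}(\hat L_v)\leftrightarrow \hat L_v$ that are compatible with the attaching maps through chosen, coherent homotopies. The induction hypothesis only supplies morphisms in $\Ho(\Top)$, and even granting the gluing, nothing in your argument produces a cost of $2\vep$ per point.

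There is also a miscount. Weak $\vep$-contractibility makes $X_t\to X_{t+2\vep}$ null-homotopic, not $X_t\to X_{t+\vep}$. For example, $X_0=X_1=S^1$, $X_t=*$ for $t\ge 2$ is weakly $1$-contractible, yet $X_0\to X_1$ is not null. So your ``$\vep$ forward, $\vep$ back'' budget is off.

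The missing idea is the mapping cylinder. The paper replaces $Q$ by $\M_f$, which contains $P$ and is homotopy commutative $0$-interleaved with $Q$ (via $r$ and the inclusion $Q\to\M_f$). It then deletes the points of $Q$ from $\M_f$ one at a time, minimal ones first under condition (1). At each stage, the strict lower set of the point $v$ being deleted, computed in the current cylinder $M$, is exactly $f^{-1}(L_v)$. Hence $v$ is $\vep$-reducible in $M$.

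The single-point lemma (Proposition \ref{prop:ep-reducible}) then gives a homotopy commutative $2\vep$-interleaving between $M$ and $M\setminus v$. The $2\vep$-structure map of $\Lk(v)=\B(\hat L_v)\star\B(\hat U_v)$ is null-homotopic, so it extends over the cone $\St(v)_t$. The pushout $\B(M)_t=\B(M\setminus v)_t\cup\St(v)_t$ then produces the backward map.

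After $|Q|$ deletions only $P$ remains, and the triangle inequality gives $2\vep|Q|$. Each step compares one persistence poset with itself minus a single point, so no gluing of interleavings is ever needed. That gluing is exactly the step your approach cannot avoid.
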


As a consequence of our main result, one can improve the upper bound for the interleaving distance of homology.

\begin{corollary}[Corollary \ref{cor:di_H}]
For a map $f \co P \to Q$ between suitable persistence finite posets $P$ and $Q$, we assume that one of the following conditions holds:
\begin{enumerate}
\item $f^{-1}(L_v)$ is weakly $\vep$-contractible for each persistence point $v$ in $Q$.
\item $f^{-1}(U_v)$ is weakly $\vep$-contractible for each persistence point $v$ in $Q$.
\end{enumerate}
Then we have the following inequality for each $i$:
\[
\di(H_i(P), H_i(Q)) \leq 2 \vep |Q|.
\]
\end{corollary}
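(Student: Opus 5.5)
The corollary is stated right after the main theorem, so I would derive it formally from Theorem~\ref{th:main}. The idea is that homology is functorial and turns homotopy commutative interleavings into genuine interleavings of persistence modules.

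First, assume either hypothesis (1) or (2). Theorem~\ref{th:main} then gives $\dhc(P,Q) \leq 2\vep|Q|$. By the definition of $\dhc$ as an infimum, for every $\delta > 2\vep|Q|$ (or $\delta = 2\vep|Q|$ if the infimum is attained) there is a $\delta$-interleaving between the persistence spaces $\B(P)$ and $\B(Q)$ in the homotopy category $\Ho(\Top)$. Concretely, this is a family of maps $\varphi_t \co \B(P_t) \to \B(Q_{t+\delta})$ and $\psi_t \co \B(Q_t) \to \B(P_{t+\delta})$. These maps are natural in $t$ up to homotopy, and the composites $\psi_{t+\delta}\varphi_t$ and $\varphi_{t+\delta}\psi_t$ are homotopic to the structure maps shifted by $2\delta$. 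If the paper works with the finite posets as finite spaces rather than with $\B$, I would first pass through the McCord weak equivalences $\B(P_t)\to P_t$. These are natural in $t$ and make the two versions agree in the homotopy category of weak homotopy types.

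Second, apply the singular homology functor $H_i$. It factors through $\Ho(\Top)$, since homotopic maps induce equal maps and weak equivalences induce isomorphisms. So every diagram that commutes up to homotopy becomes strictly commutative after applying $H_i$. The families $H_i(\varphi_t)$ and $H_i(\psi_t)$ are therefore morphisms of persistence modules $H_i(P) \to H_i(Q)[\delta]$ and back, and the interleaving triangles commute on the nose. This is a $\delta$-interleaving in the ordinary sense, so $\di(H_i(P),H_i(Q)) \leq \delta$. Taking the infimum over $\delta$ gives $\di(H_i(P),H_i(Q)) \le 2\vep|Q|$.

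The only point needing care is that the paper's definition of $\dhc$ really requires homotopy commutativity of the naturality squares as well as the triangles. If naturality were only required in some weaker or non-rigidified sense, I would check directly that each required square still becomes commutative after applying $H_i$. I expect no genuine obstacle: the content lies entirely in Theorem~\ref{th:main}, and the corollary is the standard fact that $\di(H_i X, H_i Y) \le \dhc(X,Y)$.
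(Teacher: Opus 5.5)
Your proposal is correct and is the paper's argument: Theorem~\ref{th:main} gives a homotopy commutative $2\vep|Q|$-interleaving, and since $H_i$ factors through $\Ho(\Top)$, Lemma~\ref{lem:functor} (stated as Proposition~\ref{prop:homology}) turns it into a $2\vep|Q|$-interleaving of persistence modules. The infimum over $\delta$ and the explicit representatives on $\B(P)$ and $\B(Q)$ are unnecessary. The theorem already gives the interleaving at exactly $2\vep|Q|$, and the functor applies directly to morphisms of $\Ho(\Top)$.
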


The remainder of this paper is organized as follows:
In Section 2, we review persistence theory and the homotopy theory of simplicial complexes. 
Since our arguments mainly rely on the order complexes associated with finite spaces (posets), we also provide the necessary preliminaries.

In Section 3, we prove the main theorem (Theorem \ref{th:main}). 
The key ingredient in the proof is an estimate of how the distance $\dhc$ changes when a reducible point is removed, as in Proposition \ref{prop:ep-reducible}. 
Combining this estimate with the reduction method based on mapping cylinders developed in \cite{Bar11a, Guz}, we obtain the result.

%%%%%%%%%%%%%%%%%%%%%%%%%%%%%%%%%%%%%%%%%%%%%%%%%%%%%%%%%%%%%%%%%%%%
\section{Preliminaries on persistence objects}\label{sec:preliminaries}% Sec 2
%%%%%%%%%%%%%%%%%%%%%%%%%%%%%%%%%%%%%%%%%%%%%%%%%%%%%%%%%%%%%%%%%%%%

To treat indexed objects such as vector spaces and topological spaces in a unified manner, we introduce the notions of persistence objects and interleavings in a category.

%%%%%%%%%%%%%%%%%%%%%%%%%%%%%%%%%%%%%%%%%%%%%%%%%%%%%%%%%%%%%%%%%%%%
\subsection{Persistence objects}\label{sec:persistence_objects}% Sec2.1
%%%%%%%%%%%%%%%%%%%%%%%%%%%%%%%%%%%%%%%%%%%%%%%%%%%%%%%%%%%%%%%%%%%%

A poset $P$ can be regarded as a category whose object set is $P$ and there exists at most one morphism between two objects.
Let $\N$ denote the poset of natural numbers with the standard total order.

\begin{definition}
Let $\C$ be a category. A {\em persistence object} in $\C$ (indexed by $\N$) is a functor $X \co \N \to \C$.
In other words, a persistence object $X$ consists of a family of objects $\{X_i\}_{i \in \N}$ and morphisms $\varphi^{X}_{i,j} \co X_i \to X_j$ in $\C$ for $i \leq j$ satisfying the following condition:
\begin{enumerate}
\item $\varphi^{X}_{i,i}=\id_{X_i}$ for $i \in \N$.
\item $\varphi^{X}_{j,k} \circ \varphi^{X}_{i,j} = \varphi^{X}_{i,k}$ for $i \leq j \leq k$.
\end{enumerate}
\end{definition}

The morphisms $\varphi^{X}_{i,j}$ are called {\em structure maps} and are denoted simply by $\varphi_{i,j}$ when there is no risk of confusion.
For a persistence object $X$ in $\C$, the structure maps are determined by a family of maps $\{\varphi_{i,i+1}\}_{i \in \N}$ because any $\varphi_{i,j}$ can be represented as
\[
\varphi_{i,j}= \varphi_{j-1,j} \circ \cdots \circ \varphi_{i+1,i+2} \circ \varphi_{i,i+1}.
\]

\begin{remark}
When dealing with continuously varying objects, it is common to regard $\R$, rather than $\N$, as a totally ordered set and to consider functors $\R \to \C$ as persistence objects in $\C$. 
However, since we mainly focus on discrete finite spaces (posets), we take $\N$ as the indexing category.
\end{remark}

\begin{example}
We give several typical examples of persistence objects.
\begin{enumerate}
\item Let $R$ be a PID. A functor $\N \to R\hyphenmod$ to the category $R\hyphenmod$ of $R$-modules is called a {\em persistence module} over $R$.
\item A functor $\N \to \Top$ to the category $\Top$ of topological spaces is called a {\em persistence space}.
\item A functor $\N \to \Pos$ to the category $\Pos$ of posets is called a {\em persistence poset}.
\end{enumerate}
\end{example}

A persistence object that stabilizes after some point is said to be of finite type.

\begin{definition}
A persistence object $X$ in $\C$ is said to be of {\em finite type} if there exists $i \in \N$ such that the structure map $\varphi_{j,k} \co X_{j} \to X_{k}$ is an isomorphism for all $i \leq j \leq k$.
\end{definition}

We introduce morphisms between persistence objects that take shifts into account.

\begin{definition}
For $\vep \in \N$ and two persistence objects $X$ and $Y$ in $\C$, an {\em $\vep$-map (or morphism)} $f \co X \to Y$ is a family of morphisms $\{f_i \co X_i \to Y_{i+\vep}\}_{i \in \N}$ in $\C$ making the following diagram commute for any $i \leq j$:
\[
\xymatrix{
X_{i} \ar[r]^{f_i} \ar[d]_{\varphi^{X}_{i,j}}& Y_{i+\vep} \ar[d]^{\varphi^{Y}_{i+\vep,j+\vep}}\\
X_{j} \ar[r]_{f_j} & Y_{j+\vep}.
 \ar@{}[ul]|{ \circlearrowleft }
}
\]
In particular, a $0$-map agrees with a natural transformation between the functors $X$ and $Y$, and we simply call it a {\em map (or morphism)} of persistence objects.
\end{definition}

A particularly important self $\vep$-map is the shift map defined using the structure maps.
Let $X$ be a persistence object in $\C$. The $\vep$-map $\Id^\vep \co X \to X$ is defined by $\Id^{\vep}_{i}=\varphi_{i,i+\vep}$.

\begin{definition}
Persistence objects $X$ and $Y$ in $\C$ are {\em $\vep$-interleaved} if there exist $\vep$-maps
$f \co X \to Y$ and $g \co Y \to X$ such that $g \circ f = \Id^{2\vep}$ and $f \circ g = \Id^{2\vep}$.
That is, for each $i \in \N$, the composition $g_{i+\vep} \circ f_{i} \co X_i \to X_{i+2 \vep}$ coincides with the structure map $\varphi^X_{i,i+2\vep}$, and the composition $f_{i+\vep} \circ g_{i} \co Y_i \to Y_{i+2 \vep}$ coincides with the structure map $\varphi^Y_{i,i+2\vep}$.
\end{definition}

Note that $0$-interleaved persistence objects are isomorphic as functors. 

For two persistence objects $X$ and $Y$ in a category $\C$, the {\em interleaving distance} $\di(X,Y)$ is defined as 
\[
d_{\mathrm{I}}(X,Y) = \inf\{ \vep \in \N \cup \{0\} \mid \textrm{$X$ and $Y$ are $\vep$-interleaved} \}.
\]
This is an extended pseudometric on persistence objects.

\begin{lemma}\label{lem:functor}
Let $F \co \C \to \D$ be a functor. 
If two persistence objects $X$ and $Y$ in $\C$ are $\vep$-interleaved, then $F(X)$ and $F(Y)$ are $\vep$-interleaved in $\D$. Moreover, we have $\di(F(X),F(Y)) \leq \di(X,Y)$.
\end{lemma}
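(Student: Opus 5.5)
The plan is to push an interleaving forward through $F$ by composing with $F$, checking functoriality at each step. Throughout, I regard a persistence object as a functor $\N \to \C$; post-composing with $F$ gives a persistence object $F(X) = F \circ X \co \N \to \D$. Its components are $F(X)_i = F(X_i)$ and its structure maps are $F(\varphi^X_{i,j})$. The two defining axioms hold because $F$ preserves identities and composition.

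First, I will show that an $\vep$-map $f \co X \to Y$ induces an $\vep$-map $F(f) \co F(X) \to F(Y)$ with components $F(f_i) \co F(X_i) \to F(Y_{i+\vep})$. The required square commutes because applying $F$ to the commuting square
\[
\varphi^{Y}_{i+\vep,j+\vep} \circ f_i = f_j \circ \varphi^X_{i,j}
\]
gives
\[
F(\varphi^{Y}_{i+\vep,j+\vep}) \circ F(f_i) = F(f_j) \circ F(\varphi^X_{i,j}).
\]
I will also note that $F$ sends the shift map $\Id^{2\vep}$ on $X$ to the shift map $\Id^{2\vep}$ on $F(X)$, since its components are $F(\varphi^X_{i,i+2\vep})$, which are exactly the structure maps of $F(X)$.

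Next, suppose $f \co X \to Y$ and $g \co Y \to X$ form an $\vep$-interleaving. The identity $g_{i+\vep}\circ f_i = \varphi^X_{i,i+2\vep}$ becomes
\[
F(g_{i+\vep})\circ F(f_i) = F(\varphi^X_{i,i+2\vep}),
\]
and the symmetric identity for $f \circ g$ transforms in the same way. Hence $F(f)$ and $F(g)$ form an $\vep$-interleaving between $F(X)$ and $F(Y)$.

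Finally, for the inequality, the set of $\vep$ for which $X$ and $Y$ are $\vep$-interleaved is contained in the corresponding set for $F(X)$ and $F(Y)$. The infimum over the larger set is at most the infimum over the smaller one. If $X$ and $Y$ admit no interleaving, then $\di(X,Y) = \infty$ and the inequality holds trivially.

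There is no real obstacle here. The only point needing care is the identification $F(\Id^{2\vep}) = \Id^{2\vep}$, which follows directly from the definition of the structure maps of $F(X)$.
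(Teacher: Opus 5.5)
Your proof is correct and takes the same approach as the paper. The paper's proof is the one-line remark that the result follows because $F$ preserves compositions and identities. You have written out the checks that remark compresses: the structure maps of $F(X)$, the $\vep$-map squares, $F(\Id^{2\vep})=\Id^{2\vep}$, the interleaving identities, and the infimum comparison, including the case where no interleaving exists.
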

\begin{proof}
This immediately follows from the fact that $F$ preserves compositions and identities.
\end{proof}

%%%%%%%%%%%%%%%%%%%%%%%%%%%%%%%%%%%%%%%%%%%%%%%%%%%%%%%%%%%%%%%%%%%%
\subsection{Persistence spaces}\label{sec:persistence_spaces}% Sec 2.2
%%%%%%%%%%%%%%%%%%%%%%%%%%%%%%%%%%%%%%%%%%%%%%%%%%%%%%%%%%%%%%%%%%%%

For persistence spaces, the notion of interleaving in Section \ref{sec:persistence_objects} can be extended to the up-to-homotopy version.
In principle, persistence spaces $\N \to \Top$ themselves could be extended to lax functors. 

However, since constructions such as mapping cylinders become technically complicated in that framework, we restrict our attention here to the up-to-homotopy version only for maps.

\begin{definition}
Let $X$ and $Y$ be persistence spaces. A {\em lax $\vep$-map} $f \co X \to Y$ is a family of maps $\{f_i \co X_i \to Y_{i+\vep}\}_{i \in \N}$ making the following diagram commute up to homotopy for any $i \leq j$:
\[
\xymatrix{
X_i \ar[r]^{f_i} \ar[d]_{\varphi^{X}_{i,j}}& Y_{i+\vep} \ar[d]^{\varphi^{Y}_{i+\vep,j+\vep}}\\
X_{j} \ar[r]_{f_j} & Y_{j+\vep}.
 \ar@{}[ul]|{ \simeq }
}
\]

\end{definition}

\begin{remark}
A lax natural transformation $X \Rightarrow Y$ (see \cite{Str72, JY21}) between persistence spaces $X$ and $Y$ yields a lax $0$-map.
Conversely, if we have a lax $0$-map $f \co X \to Y$, then a family of homotopies 
\[
\{ h_i \co \varphi^{Y}_{i,i+1} \circ f_i \Rightarrow f_{i+1} \circ \varphi^{X}_{i,i+1}\}_{i \in \N}
\]
yields a lax natural transformation $\alpha \co X \Rightarrow Y$. That is, $\alpha_i=f_i$ for $i \in \N$ and $\alpha_{i \leq j}$ is the homotopy given by the following composition of homotopies:
\[
\xymatrix{
X_i \ar[r]^{\varphi^{X}_{i,i+1}} \ar[d]_{f_i}& X_{i+1} \ar[r]^{\varphi^{X}_{i+1,i+2}}  \ar[d]^{f_{i+1}} & X_{i+2} \ar[r] \ar[d]^{f_{i+2}} & \cdots \ar[r]^{\varphi^{X}_{j-1,j}}   & X_j \ar[d]^{f_{j}}\\
Y_{i} \ar[r]_{\varphi^{Y}_{i,i+1}} & Y_{i+1} \ar@{}[ul]|{ \stackrel{h_{i}}{\Rightarrow} } \ar[r]_{\varphi^{Y}_{i+1,i+2}}  & Y_{i+2} \ar@{}[ul]|{ \stackrel{h_{i+1}}{\Rightarrow} }  \ar[r] & \cdots \ar[r]_{\varphi^{Y}_{j-1,j}}  & Y_j. \ar@{}[ul]|{ \stackrel{h_{j-1}}{\Rightarrow} }
}
\]
Indeed, $\alpha$ satisfies the coherence axiom with respect to identities and composition of lax natural transformations.

We simply call a lax $0$-map a {\em lax map}.
\end{remark}

There are several notions of interleavings up to homotopy for persistence spaces, or more generally persistence objects in a model category (see \cite{BL23, LS23}).
One simple idea is to consider interleavings in the homotopy category of topological spaces.
Here, the homotopy category $\Ho(\Top)$ is the localization of $\Top$ with respect to weak homotopy equivalences.

\begin{definition}
Persistence spaces $X$ and $Y$ are {\em homotopy commutative $\vep$-interleaved} if they are $\vep$-interleaved in the homotopy category $\Ho(\Top)$.
\end{definition}

More precisely, if $\Gamma \co \Top \to \Ho(\Top)$ is the localization, which is the identity on objects, then two persistence spaces $X$ and $Y$ are homotopy commutative $\vep$-interleaved if and only if $\Gamma(X)$ and $\Gamma(Y)$ are $\vep$-interleaved. 
We then define the {\em homotopy interleaving distance} $\dhc$ by
\[
\dhc(X,Y):= \di(\Gamma(X),\Gamma(Y)).
\]

\begin{remark}\label{rem:CW}
In the case that  $X$ and $Y$ are persistence CW complexes, the homotopy commutative interleavings between $X$ and $Y$ can be translated into lax maps by the Whitehead theorem. 
Namely, $X$ and $Y$ are homotopy commutative $\vep$-interleaved if and only if 
there exist lax $\vep$-maps
$f \co X \to Y$ and $g \co Y \to X$ such that $g \circ f \simeq \Id^X_{2 \vep}$ and $f \circ g \simeq \Id_{2 \vep}^{Y}$.

That is, for each $i \in \N$, the composition $g_{i+\vep} \circ f_{i} \co X_i \to X_{i+2 \vep}$ is homotopic to the structure map $\varphi^X_{i,i+2\vep}$ in $X$, and the composition $f_{i+\vep} \circ g_{i} \co Y_i \to Y_{i+2 \vep}$ is homotopic to the structure map $\varphi^Y_{i,i+2\vep}$ in $Y$.
\end{remark}

The following facts follow immediately from the definition of homotopy commutative interleavings.

\begin{proposition}\label{prop:interleaving}
We have the following properties for homotopy commutative interleavings:
\begin{enumerate}
\item $\vep$-interleaved persistence spaces are homotopy commutative $\vep$-interleaved.
\item If $X,Y$ are homotopy commutative $\vep$-interleaved and $Y, Z$ are homotopy commutative $\delta$-interleaved, then $X, Z$ are homotopy commutative $(\vep+\delta)$-interleaved.
\item Two persistence CW complexes $X$ and $Y$ are homotopy commutative $0$-interleaved if and only if we have a family of homotopy equivalences $\{f_i \co X_i \to Y_i\}_{i \in \N}$ compatible with structure maps up to homotopy.
\end{enumerate}
\end{proposition}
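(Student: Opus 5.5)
Each of the three items comes directly from the definition $\dhc(X,Y)=\di(\Gamma(X),\Gamma(Y))$, where $\Gamma\co \Top\to\Ho(\Top)$ is the localization. For item (1) I apply Lemma \ref{lem:functor} with $F=\Gamma$. An $\vep$-interleaving $(f,g)$ in $\Top$ satisfies $g\circ f=\Id^{2\vep}$ and $f\circ g=\Id^{2\vep}$ strictly. Since $\Gamma$ is a functor, these equations still hold after applying $\Gamma$. So $(\Gamma f,\Gamma g)$ is an $\vep$-interleaving of $\Gamma(X)$ and $\Gamma(Y)$ in $\Ho(\Top)$.

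For item (2) I prove the triangle inequality for interleavings in an arbitrary category $\C$ and then take $\C=\Ho(\Top)$. Let $(f,g)$ be an $\vep$-interleaving of $X,Y$ and $(f',g')$ a $\delta$-interleaving of $Y,Z$. I set $h_i=f'_{i+\vep}\circ f_i\co X_i\to Z_{i+\vep+\delta}$ and $k_i=g_{i+\delta}\circ g'_i\co Z_i\to X_{i+\vep+\delta}$.
\begin{itemize}
\item The squares for $h$ and $k$ commute because the squares for $f,f',g,g'$ do, so $h$ and $k$ are $(\vep+\delta)$-maps.
\item The composite $k_{i+\vep+\delta}\circ h_i$ equals $g_{i+\vep+2\delta}\circ g'_{i+\vep+\delta}\circ f'_{i+\vep}\circ f_i$.
\item The middle pair $g'_{i+\vep+\delta}\circ f'_{i+\vep}$ equals $\varphi^Y_{i+\vep,i+\vep+2\delta}$.
\item Using naturality of $g$, I move this structure map past $g$: $g_{i+\vep+2\delta}\circ\varphi^Y_{i+\vep,i+\vep+2\delta}=\varphi^X_{i+2\vep,i+2\vep+2\delta}\circ g_{i+\vep}$.
\item Then $g_{i+\vep}\circ f_i=\varphi^X_{i,i+2\vep}$, and functoriality of $X$ gives $\varphi^X_{i,i+2\vep+2\delta}$.
\end{itemize}
The identity $h\circ k=\Id^{2(\vep+\delta)}$ on $Z$ follows by the symmetric computation.

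For item (3), a $0$-interleaving in $\Ho(\Top)$ is a natural isomorphism $\Gamma(X)\cong\Gamma(Y)$. Since $X_i$ and $Y_i$ are CW complexes, morphisms in $\Ho(\Top)$ between them are homotopy classes of maps, by the Whitehead theorem as in Remark \ref{rem:CW}. So each component is represented by a map $f_i\co X_i\to Y_i$ that is invertible up to homotopy, i.e.\ a homotopy equivalence. Naturality in $\Ho(\Top)$ says exactly that $\varphi^Y_{i,j}\circ f_i\simeq f_j\circ\varphi^X_{i,j}$. Conversely, given such a family, the classes $[f_i]$ are isomorphisms in $\Ho(\Top)$ and form a natural transformation, and the inverse classes give the other $0$-map. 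The inverse family is natural because the inverse of a natural isomorphism is natural.

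The one step needing real care is this identification of $\Ho(\Top)$-morphisms between CW complexes with homotopy classes of maps. I take it as standard, as the paper already does in Remark \ref{rem:CW}.
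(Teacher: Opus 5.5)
Your proposal is correct and matches the paper, which gives no separate argument and simply says that these three facts follow immediately from the definition $\dhc(X,Y)=\di(\Gamma(X),\Gamma(Y))$. You unwind that definition explicitly: functoriality of $\Gamma$ for (1), composition of interleavings in the category $\Ho(\Top)$ for (2) (your index bookkeeping checks out), and, for (3), the identification of $\Ho(\Top)$-morphisms out of CW complexes with homotopy classes of maps, as in Remark \ref{rem:CW}.
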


A persistence space $X$ is called {\em empty} if $X_i = \emptyset$ for all $i \in \N$.

\begin{definition}
The {\em threshold} $\trh(X) \in \N$ of a persistence space $X$ is defined as
\[
\trh(X)= \min \{ i \in \N \mid X_i \neq \emptyset\}.
\]
When $X$ is empty, we set $\trh(X)=\infty$.
\end{definition}

Let $*^{i}$ denote a non-empty persistence space with the threshold $i \in \N$ whose non-empty terms are all one-point spaces:
\[
\emptyset \ra \emptyset \ra \cdots \ra \emptyset \ra * \ra * \ra * \ra \cdots
\]

\begin{definition}\label{def:contractible}
A persistence space $X$ is {\em weakly $\vep$-contractible} if $X$ and $*^i$ are homotopy commutative $\vep$-interleaved for $i=\trh(X)$.
Moreover, $X$ is {\em $\vep$-acyclic} if $H_j(X)$ and $H_j(*^i)$ are $\vep$-interleaved modules for $i=\trh(X)$ and any $j$.
\end{definition}

The notion of $\vep$-acyclicity for persistence spaces was originally introduced in \cite{Guz}; however, in this paper, we introduce it with the threshold in order to ensure a more rigorous set-theoretic treatment. 

As we will see in Proposition \ref{prop:homology}, every weakly $\vep$-contractible space is $\vep$-acyclic.

\begin{definition}
A {\em persistence subspace} $Y$ of a persistence space $X$ is a persistence space satisfying the following conditions:
\begin{enumerate}
\item $Y_i$ is a subspace of $X_i$ for each $i \in \N$.
\item The structure map $\varphi^Y_{i,j} \co Y_i \to Y_j$ satisfies $\varphi^Y_{i,j}(y)=\varphi^{X}_{i,j}(y)$ for any $y \in Y_i$ and $i \leq j$.
\end{enumerate}
\end{definition}

The family of inclusions gives a map $Y \hookrightarrow X$ of persistence spaces.

\begin{definition}
Let $X$ be a persistence space. A {\em persistence point} of $X$ is a non-empty persistence subspace $x$ of $X$ such that the cardinality $|x_i| \leq 1$ for each $i \in \N$, and $\varphi_{i,j}^{-1}(x_j)=\{x_i\}$ for $i \geq \trh(x)$.
In this case, we write $x \in X$.
\end{definition}

\begin{remark}
For a persistence point $x$ in $X$, we define a persistence subspace $X \bs x$ of $X$ by $(X \bs x)_i=X_i \bs x_i$ for $i \in \N$. 
Note that the restriction of structure map $X_{i} \to X_{j}$ onto $X_i \bs x_i$ maps into $X_j \bs x_j$.
\end{remark}

A persistence space $X$ is called a {\em filtration} if every structure map $\varphi^X_{i,j}$ is injective.

\begin{example}\label{ex:point}
Let $X$ be a filtration. A point $x_i \in X_i$ determines a persistence point $x$ in $X$ by 
\[
x_j=
\begin{cases}
\varphi_{i,j}(x_i) & \textrm{if $i \leq j$,} \\
 \varphi_{j,i}^{-1}(x_i) &\textrm{if $ j<i$.}
\end{cases}
\]
Conversely, any persistence point $x$ in a filtration $X$ is uniquely determined by a point $x_i \in X_i$ for some $i \in \N$.
\end{example}

\begin{remark}
A filtration $X$ can be decomposed into persistence points; that is 
\[
X = \coprod_{x \in X}x
\]
as a persistence set.
This does not hold in general for persistence spaces (sets) that are not filtrations. 
This fact is essential to the reduction method for persistence points used in the proof of Theorem \ref{th:main}.
\end{remark}

%%%%%%%%%%%%%%%%%%%%%%%%%%%%%%%%%%%%%%%%%%%%%%%%%%%%%%%%%%%%%%%%%%%%
\subsection{Simplicial complexes}% Sec 2.3
%%%%%%%%%%%%%%%%%%%%%%%%%%%%%%%%%%%%%%%%%%%%%%%%%%%%%%%%%%%%%%%%%%%%

In developing a combinatorial homotopy theory, simplicial complexes are convenient to work with. 
For basic notions concerning (abstract) simplicial complexes, we refer the reader to \cite[Section 3]{Spa66}.

\begin{definition}
A {\em simplicial complex} $K$ consists of the set of vertices $V(K)$ and the set of simplices $\Sigma(K)$ which is a subset of the power set $2^{V(K)}$ satisfying the following conditions:
\begin{enumerate}
\item $\{v\} \in \Sigma(K)$ for any $v \in V(K)$,
\item $\sigma \subset \tau$ and $\tau \in \Sigma(K)$ imply $\sigma \in \Sigma(K)$.
\end{enumerate}
\end{definition}
We deal only with finite simplicial complexes which have finitely many vertices in this paper.

\begin{definition}
The {\em geometric realization} $||K||$ of a finite simplicial complex $K$ is the topological space constructed as follows:
Let $V(K)=\{v_0,\cdots v_n\}$ and we identify $v_i$ with the vector $(0,\cdots, 0,1,0, \cdots, 0) \in \mathbb{R}^{n+1}$ whose $(i+1)$-component is $1$ and all the others are $0$.
\[
||K|| = \left\{\sum_{i=0}^{n} \lambda_iv_i \mid \lambda_i \geq 0, \sum_{i=0}^{n} \lambda_i=1, \{v_i \mid \lambda_i>0\} \in \Sigma(K) \right\}.
\]
\end{definition}

\begin{definition}
Let $K$ and $L$ be simplicial complexes. The {\em join} $K \star L$ is a simplicial complex with $V(K \star L)=V(K) \coprod V(L)$ and 
\[
\Sigma(K \star L)=\{\sigma \cup \tau \mid \sigma \in \Sigma(K), \tau \in \Sigma(L)\}.
\]
\end{definition}

The geometric realization $||K \star L||$ of the simplicial join $K \star L$ is naturally homeomorphic to the topological join $||K|| \star ||L||$.

\begin{definition}
Let $K$ be a simplicial complex and $v \in V(K)$ be a vertex.
\begin{enumerate}
\item The {\em star} $\st_K(v)$ is a subcomplex of $K$ that consists of
\[
\Sigma(\st_K(v))=\{ \sigma \in \Sigma(K) \mid \sigma \cup \{v\} \in \Sigma(K)\}.
\] 
\item The {\em link} $\lk_K(v)$ is a  subcomplex of $K$ (and $\st(v)$) that consists of
\[
\Sigma(\lk_K(v))=\{ \sigma \in \Sigma(K) \mid v \not \in \sigma, \sigma \cup \{v\} \in \Sigma(K)\}.
\] 
\end{enumerate}
We denote them simply by $\st(v)$ and $\lk(v)$ when there is no risk of confusion.
Furthermore, the geometric realizations are denoted by $\St(v)=||\st(v)||$ and $\Lk(v)=||\lk(v)||$.
\end{definition}

For a vertex $v$ of a simplicial complex $K$, we have $\st(v)=\lk(v) \star v$.

\begin{definition}
Let $P$ be a finite poset. The {\em order complex} $\K(P)$ is the simplicial complex with $V(\K(P))=P$ such that $\Sigma(\K(P))$ consists of totally ordered subsets in $P$.
The geometric realization $||\K(P)||$ is denoted by $\B(P)$ and is called the {\em classifying space} of $P$.
\end{definition}

A {\em simplicial map} $\varphi \co K \to L$ between simplicial complexes $K$ and $L$ is a map on vertices $\varphi \co V(K) \to V(L)$ which sends each simplex of $K$ to a simplex $L$.

An order preserving map $f \co P \to Q$ induces a simplicial map $f_{\K} \co \K(P) \to \K(Q)$ defined by $f_{\K}(p)=f(p)$.
Furthermore, a simplicial map $\varphi \co K \to L$ induces a continuous map $||\varphi|| \co ||K|| \to ||L||$ defined by linear extension:
\[
||\varphi|| \left(\sum_{v \in V(K)}\lambda_v v \right) = \sum_{v \in V(K)} \lambda_v \varphi(v).
\]

\begin{definition}
Two simplicial maps $\varphi, \psi \co K \to L$ are called {\em contiguous} if $\varphi(\sigma) \cup \psi(\sigma)$ is a simplex of $L$ for each simplex $\sigma$ in $K$.
In this case, we write $\varphi \sim_{\mathrm{c}} \psi$.
\end{definition}

The contiguity relation on simplicial maps is reflexive and symmetric, but not transitive in general.
An equivalence relation on simplicial maps generated from $\sim_{\mathrm{c}}$ is denoted by $\sim$.
That is, $\varphi \sim \psi \co K \to L$ if and only if there exists a finite number of simplicial maps $\varphi=\omega_1,\omega_2, \cdots, \omega_n=\psi$ from $K$ to $L$ such that $\omega_i \sim_{\mathrm{c}} \omega_{i+1}$ for each $i$.
In this case, we say that $\varphi$ and $\psi$ are {\em in the same contiguity class}.

\begin{lemma}[Lemma 3.5.2 in \cite{Spa66}]
If two simplicial maps $\varphi, \psi \co K \to L$ are in the same contiguity class, then the induced maps $||\varphi||, ||\psi|| \co ||K|| \to ||L||$ are homotopic.
\end{lemma}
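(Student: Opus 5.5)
The plan is to reduce to the case of two contiguous simplicial maps, since ``same contiguity class'' is defined as a finite chain $\varphi=\omega_1 \sim_{\mathrm{c}} \omega_2 \sim_{\mathrm{c}} \cdots \sim_{\mathrm{c}} \omega_n=\psi$. Homotopy of maps $||K|| \to ||L||$ is an equivalence relation, so transitivity gives $||\varphi|| \simeq ||\psi||$ once each consecutive pair $||\omega_i|| \simeq ||\omega_{i+1}||$ is known. So it suffices to assume $\varphi \sim_{\mathrm{c}} \psi$.

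For contiguous $\varphi,\psi$, I will use the straight-line homotopy
\[
H(x,t) = (1-t)\,||\varphi||(x) + t\,||\psi||(x), \qquad x \in ||K||,\ t \in [0,1],
\]
computed in the ambient Euclidean space $\R^{m+1}$ in which $||L||$ is realized. The key step is checking that $H(x,t) \in ||L||$. Write $x=\sum_{v} \lambda_v v$ and let $\sigma=\{v \mid \lambda_v>0\}$, which is a simplex of $K$. Then $||\varphi||(x)$ is a convex combination of vertices in $\varphi(\sigma)$, and $||\psi||(x)$ is a convex combination of vertices in $\psi(\sigma)$. Therefore $H(x,t)$ is a convex combination of vertices of $\varphi(\sigma)\cup\psi(\sigma)$. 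The coordinates are nonnegative and sum to $1$, and the support of $H(x,t)$ is a subset of $\varphi(\sigma)\cup\psi(\sigma)$. By contiguity this union is a simplex of $L$, and by the face-closure axiom any subset of it is too. Hence $H(x,t)\in ||L||$.

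Continuity of $H$ follows because $||\varphi||$ and $||\psi||$ are restrictions of linear maps $\R^{n+1}\to\R^{m+1}$, and $H$ is a continuous combination of them. The endpoints are $H(\cdot,0)=||\varphi||$ and $H(\cdot,1)=||\psi||$. The only point needing care is the support argument in the previous paragraph, and it is exactly where contiguity is used. Since the complexes are finite and realized as subspaces of Euclidean space, with the subspace topology, no weak-topology issues arise.
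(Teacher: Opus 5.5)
Your argument is correct and is essentially the standard one: the paper gives no proof of its own but cites Spanier's Lemma 3.5.2. That lemma is proved exactly by reducing to a single contiguous pair and taking the straight-line homotopy, which stays in $||L||$ because its support lies in the simplex $\varphi(\sigma)\cup\psi(\sigma)$, and the paper uses this same interpolation $(1-t')||\psi_{i-1}||(x)+t'||\psi_i||(x)$ in the proof of Lemma \ref{lem:contiguity}.
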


Let $\varphi \co K \to L$ be a simplicial map and let $v$ be a vertex of $K$.
The restriction of $\varphi$ induces 
\[
\varphi_{\st}=\varphi|_{\st_K(v)} \co \st_K(v) \ra \st_L(\varphi(v)).
\]
In addition,  if $\varphi$ satisfies $\varphi^{-1}(\varphi(v))=\{v\}$, the restriction induces
\[
\varphi_{\lk}=\varphi|_{\lk_K(v)} \co \lk_K(v) \ra \lk_L(\varphi(v)).
\]
Let $\varphi_{\St}=||\varphi_{\st}||$ and $\varphi_{\Lk}=||\varphi_{\lk}||$ denote the induced map on the geometric realizations.

We consider the following commutative diagram of spaces for a simplicial map $\varphi \co K \to L$ satisfying $\varphi^{-1}(\varphi(v))=\{v\}$:
\[
\xymatrix{
\Lk_K(v) \ar[r]^{j} \ar[d]_{\varphi_{\Lk}} & \St_K(v) \ar[d]^{\varphi_{\St}} \\
\Lk_{L}(\varphi(v)) \ar[r]^{k} & \St_L(\varphi(v))
}
\]
where the horizontal maps $j,k$ are the inclusions.

\begin{lemma}\label{lem:contiguity}
Let $\varphi \co K \to L$ be a simplicial map satisfying $\varphi^{-1}(\varphi(v))=\{v\}$.
If the simplicial map $\varphi_{\lk}$ and the constant map onto a vertex $u \in \lk_{L}(\varphi(v))$ are in the same contiguity class, then there exists a continuous map $\tilde{\varphi} \co  \St_K(v) \to \Lk_{L}(\varphi(v))$ satisfying the following conditions:
\begin{enumerate}
\item $\tilde{\varphi} \circ j=\varphi_{\Lk}$, i.e., the left upper triangle in the following diagram is commutative.
\item $k \circ \tilde{\varphi} \simeq \varphi_{\St}$ relative to $\Lk_K(v)$, i.e., the lower right triangle in the following diagram is commutative up to homotopy relative to $\Lk_K(v)$.
\end{enumerate}
\[
\xymatrix{
\Lk_K(v) \ar[r]^{j} \ar[d]_{\varphi_{\Lk}} & \St_K(v) \ar[d]^{\varphi_{\St}} \ar@{..>}[ld]_{\tilde{\varphi}} \\
\Lk_{L}(\varphi(v)) \ar[r]^{k} & \St_L(\varphi(v))
}
\]
\end{lemma}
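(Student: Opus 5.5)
The idea is to use the cone structure of the star, $\St_K(v)=\Lk_K(v)\star v$, and to extend $\varphi_{\Lk}$ over the cone using a null-homotopy. The hypothesis says $\varphi_{\lk}\sim c_u$, where $c_u$ is the constant map to $u\in\lk_L(\varphi(v))$. By Lemma 3.5.2 of \cite{Spa66}, applied to $\lk_K(v)\to\lk_L(\varphi(v))$, we get a homotopy $H\co \Lk_K(v)\times[0,1]\to\Lk_L(\varphi(v))$ with $H_0=\varphi_{\Lk}$ and $H_1$ constant at $u$.

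Every point of $\St_K(v)$ other than $v$ can be written uniquely as $(1-t)x+tv$ with $x\in\Lk_K(v)$ and $t\in[0,1)$. We define
\[
\tilde\varphi\bigl((1-t)x+tv\bigr)=H(x,t),\qquad \tilde\varphi(v)=u.
\]
This factors through the quotient $\Lk_K(v)\times[0,1]/\Lk_K(v)\times\{1\}$, and that quotient is homeomorphic to $\St_K(v)$ because $\Lk_K(v)$ is compact (it is a finite complex). Hence $\tilde\varphi$ is continuous. Setting $t=0$ gives $\tilde\varphi\circ j=H_0=\varphi_{\Lk}$, which is condition (1).

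For condition (2), both $k\circ\tilde\varphi$ and $\varphi_{\St}$ are maps into $\St_L(\varphi(v))$, and they agree on $\Lk_K(v)$: there $\varphi_{\St}=k\circ\varphi_{\Lk}$. Now $\St_L(\varphi(v))$ is a cone with apex $w=\varphi(v)$, so it is contractible by the straight-line contraction toward $w$. The plan is to write down an explicit homotopy rel $\Lk_K(v)$. One attempt is the linear homotopy $G_s=(1-s)\,k\tilde\varphi+s\,\varphi_{\St}$. This is fixed on $\Lk_K(v)$. However, it may fail to stay inside $\St_L(w)$, since a convex combination of two points of the star lies in the star only if the union of their supports together with $w$ is a simplex.

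The main obstacle is exactly this convexity problem. Two routes are available.

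The first route is abstract. The pair $(\St_K(v),\Lk_K(v))$ is a CW pair, and $\St_L(w)$ is contractible. Two maps $\St_K(v)\to Y$ that agree on $\Lk_K(v)$ glue to a map $\St_K(v)\cup_{\Lk_K(v)}\St_K(v)\to Y$. The obstruction to a homotopy rel $\Lk_K(v)$ lies in the homotopy groups of $Y$, and these all vanish. Concretely, the homotopy extension property of $\Lk_K(v)\times I\cup\St_K(v)\times\partial I\subset\St_K(v)\times I$ lets us extend the given map on the subspace to all of $\St_K(v)\times I$, using the contractibility of $Y$. This route is rigorous and standard.

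The second route is explicit and fixes the linear homotopy. Note that $\varphi_{\St}((1-t)x+tv)=(1-t)\varphi(x)+tw$. Use the two-stage homotopy
\[
(1-t)H(x,t)\mapsto(1-t)H(x,t)+t\,w\mapsto(1-t)\varphi(x)+tw .
\]
Every point $(1-t)y+tw$ with $y\in\Lk_L(w)$ lies in $\St_L(w)$, and each stage is fixed at $t=0$.

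I will present the first route, since it is cleanest, and mention the explicit formula as an alternative.
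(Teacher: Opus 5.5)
Your proof is correct, but it takes a different route from the paper's.

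\textbf{The paper's route.} The paper never turns the contiguity chain into a topological homotopy. It takes the chain $\varphi_{\lk}=\psi_0\sim_{\mathrm{c}}\psi_1\sim_{\mathrm{c}}\cdots\sim_{\mathrm{c}}\psi_n=c_u$ and defines $\tilde{\varphi}$ on the $i$-th slab $\frac{i-1}{n}\leq t\leq\frac{i}{n}$ of the cone by linear interpolation between $||\psi_{i-1}||(x)$ and $||\psi_i||(x)$. This is well defined because contiguous maps send each simplex into a common simplex. For condition (2) it builds maps $h_0=\varphi_{\St},h_1,\dots,h_n=k\circ\tilde{\varphi}$, where $h_i$ equals $k\circ\tilde{\varphi}$ for $t\leq i/n$ and is the rescaled cone on $\psi_i$ for $t\geq i/n$. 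It then checks that consecutive $h_i$ take values in common simplices of $\St_L(\varphi(v))$, so straight-line homotopies rel $\Lk_K(v)$ connect them.

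\textbf{Your route.} You convert the chain into a null-homotopy $H$ inside $\Lk_L(\varphi(v))$ via Spanier's Lemma 3.5.2 and cone it off. You get (2) for free: $k\circ\tilde{\varphi}$ and $\varphi_{\St}$ already agree on $\Lk_K(v)$, and $\St_L(\varphi(v))$ is a cone, hence contractible.

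\textbf{What each buys.} Your argument only uses that $\varphi_{\Lk}$ is null-homotopic in $\Lk_L(\varphi(v))$. So it proves Corollary~\ref{cor:simplicial_app} directly, with no subdivision or simplicial-approximation step. It also makes clear that condition (2) is automatic once (1) holds. The paper's argument gives an explicit piecewise-linear $\tilde{\varphi}$ and an explicit homotopy written in terms of the combinatorial data, and it stays entirely in the simplicial setting.

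\textbf{A small fix to your explicit alternative.} The starting map is $H(x,t)=k\tilde{\varphi}((1-t)x+tv)$, not $(1-t)H(x,t)$. The two stages can be written as $(1-st)H(x,t)+st\,w$ for $s$ from $0$ to $1$, followed by $(1-t)H(x,st)+tw$ for $s$ from $1$ to $0$. Both stages are fixed at $t=0$, independent of $x$ at $t=1$, and stay in $\St_L(w)$.
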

\begin{proof}
We have a finite number of simplicial maps $\varphi_{\lk}=\psi_0,\psi_1,\cdots,\psi_n=c_u$ such that $\psi_i \sim_{\mathrm{c}} \psi_{i+1} \co \lk_K(v) \to \lk_{L}(\varphi(v))$ for each $i$, where $c_u$ denotes the constant map onto $u$.

Note that the star $\St_K(v)=\Lk_K(v) \star v$ is the cone of $\Lk_K(v)$, and an element of $\St_K(v)$ can be expressed as $[x,t]$ for $x \in \Lk_K(v)$ and $t \in [0,1]$, where $[x,1]=[y,1]$ for any $x,y \in \Lk_K(v)$.

A continuous map $\tilde{\varphi} \co \St_K(v) \to \Lk_{L}(\varphi(v))$ is defined by
\[
\tilde{\varphi}[x,t] = (1-t')||\psi_{i-1}||(x)+t'||\psi_i||(x)
\]
for $t'=nt-(i-1)$ when $\frac{i-1}{n} \leq t \leq \frac{i}{n}$. This map is well-defined because of the contiguity between $\psi_{i-1}$ and $\psi_i$.
Moreover, $\tilde{\varphi}$ satisfies the first condition because 
\[
\tilde{\varphi} \circ j (x)= \tilde{\varphi}[x,0]=||\psi_0||(x)=\varphi_{\Lk}(x).
\]
We define continuous maps $h_0,h_1,\cdots,h_n \co \St_K(v) \to \St_L(\varphi(v))$ by
\[
h_{i}[x,t] = \begin{cases} k \circ \tilde{\varphi}[x,t] & \textrm{if $0 \leq t \leq \frac{i}{n}$}, \\
[\psi_{i}(x),t_{i}] &  \textrm{if $\frac{i}{n} \leq t \leq 1$}, 
\end{cases}
\]
where $t_{i}=\frac{nt-i}{n-i}$. The maps $h_i$ and $h_{i+1}$ are homotopic relative to $\Lk_K(v)$ because $h_i[x,t]$ and $h_{i+1}[x,t]$ are in a common simplex of $\St_L(\varphi(v))$ by the contiguous maps $\psi_i$ and $\psi_{i+1}$.
Hence, $\varphi_{\St}=h_0$ and $k \circ \tilde{\varphi}=h_n$ are homotopic relative to $\Lk_K(v)$.
\end{proof}

\begin{corollary}\label{cor:simplicial_app}
Consider the commutative diagram just before Lemma \ref{lem:contiguity}. If $\varphi_{\Lk}$ is homotopic to the constant map onto a vertex $u \in \lk_{L}(\varphi(v))$, then there exists a continuous map $\tilde{\varphi} \co  \St_K(v) \to \Lk_{L}(\varphi(v))$ satisfying the same conditions as  Lemma \ref{lem:contiguity}.
\end{corollary}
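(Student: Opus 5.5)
We reduce to Lemma \ref{lem:contiguity} by the simplicial approximation theorem. The hypothesis gives a homotopy $H \co \Lk_K(v) \times [0,1] \to \Lk_L(\varphi(v))$ from $\varphi_{\Lk}$ to the constant map $c_u$. Since $\Lk_K(v)$ is a finite complex, we can choose an iterated barycentric subdivision $\sd^m \lk_K(v)$ fine enough that homotopic maps out of it can be realized by simplicial maps in a single contiguity class. Concretely, the standard fact (Spanier, Section 3.5) is: if $f,g \co ||K'|| \to ||L'||$ are homotopic, then for sufficiently large $m$ there are simplicial approximations $\sd^m K' \to L'$ of $f$ and of $g$, and these lie in the same contiguity class. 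We apply this with $f=\varphi_{\Lk}$ and $g=c_u$. For $\varphi_{\Lk}$ we take the approximation obtained by composing the standard simplicial approximation $\sd^m \lk_K(v) \to \lk_K(v)$ of the identity (a ``last vertex'' map) with $\varphi_{\lk}$. For $c_u$ we take the constant simplicial map onto $u$.

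Next we transport the construction to the subdivision. The star $\St_K(v)$ is homeomorphic to the cone on $\Lk_K(v)$, and this cone is also the realization of $\sd^m\lk_K(v)\star v$. We then build $\tilde{\varphi}$ in three stages along the cone parameter. On the collar $t \in [0,\tfrac13]$, we use a straight-line homotopy from $\varphi_{\Lk}$ to $||\varphi_{\lk}\circ \lambda||$, where $\lambda$ is the last-vertex map. This homotopy is valid because a simplicial approximation $\lambda$ carries each point into the carrier simplex of its image, so the two maps land in a common simplex of $\Lk_L(\varphi(v))$. On $t\in[\tfrac13,1]$, we use the contiguity chain from $\varphi_{\lk}\circ\lambda$ to $c_u$ exactly as in the proof of Lemma \ref{lem:contiguity}. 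At the cone point the map is constant at $u$, so $\tilde\varphi$ is well defined, and $\tilde\varphi\circ j=\varphi_{\Lk}$ holds by construction.

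For condition (2), we reuse the family $h_i$ from the proof of Lemma \ref{lem:contiguity}: at parameter level $s$, follow $\tilde\varphi$ up to $s$ and then cone off linearly to $\varphi(v)$ in $\St_L(\varphi(v))=\Lk_L(\varphi(v))\star\varphi(v)$. Each pair of consecutive stages lies in a common simplex of the star, because the maps involved are contiguous or are simplicial approximations. These straight-line homotopies fix $t=0$, so they are relative to $\Lk_K(v)$. Concatenating them gives $k\circ\tilde\varphi\simeq\varphi_{\St}$ rel $\Lk_K(v)$.

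The main obstacle is the first step: checking that homotopic maps admit simplicial approximations on a common subdivision that are in the same contiguity class. The plan is to apply the simplicial approximation theorem to the homotopy $H$, using a subdivision of $\Lk_K(v)\times[0,1]$ that restricts to $\sd^m$ on both ends. One then compares consecutive slices and checks that they are contiguous via the carrier property.
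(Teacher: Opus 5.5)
Your proposal is correct and follows essentially the same route as the paper: take a simplicial approximation $\omega\co\sd^n(\lk_K(v))\to\lk_L(\varphi(v))$ of $\varphi_{\Lk}$ lying in the contiguity class of $c_u$ (Spanier's Theorem 3.5.6), then run the cone construction of Lemma \ref{lem:contiguity} through $\varphi_{\Lk}, ||\omega||, ||\psi_1||,\dots,c_u$, using that $\varphi_{\Lk}(\alpha)$ and $||\omega||(\alpha)$ lie in a common simplex. Your explicit choice $\omega=\varphi_{\lk}\circ\lambda$ is harmless, since any two simplicial approximations to the same map are contiguous; and the ``main obstacle'' you flag is exactly the step the paper also delegates to Spanier's theorem rather than reproving.
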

\begin{proof}
We use the simplicial approximation theorem. 
Let $\sd(M)$ denote the barycentric subdivision of a simplicial complex $M$ that consists of barycenters of simplices in $M$.

By \cite[Theorem 3.5.6]{Spa66}, there exists a simplicial approximation 
\[
\omega \co \sd^{n}(\lk_K(v)) \to \lk_L(\varphi(v))
\]
of $\varphi_{\Lk}$ for sufficiently large $n$ such that $\omega$ and the constant map onto $u$ are in the same contiguity class. 
Note that every simplicial approximation to the constant map onto the vertex $u$ is constant onto $u$.

Because $\varphi_{\Lk}(\alpha)$ and $||\omega||(\alpha)$ are in a common simplex of $\Lk_L(\varphi(v))$ for any $\alpha \in ||\sd^{n}(\lk_K(v))||$, we can construct $\tilde{\varphi}$ in a similar way to Lemma  \ref{lem:contiguity} by spanning 
$\varphi_{\Lk}, ||\omega||, ||\psi_1||,\cdots, ||\psi_n||$, where $\omega=\psi_0, \psi_1,\cdots,\psi_n=c_u$ are simplicial maps $\sd^n(\lk_K(v)) \to \lk_L(\varphi(v))$ such that $\psi_i \sim_{\mathrm{c}} \psi_{i+1}$ for each $i$.
We can verify that $\tilde{\varphi}$ satisfies the desired conditions by a similar argument to that in the proof of Lemma \ref{lem:contiguity}.
\end{proof}

%%%%%%%%%%%%%%%%%%%%%%%%%%%%%%%%%%%%%%%%%%%%%%%%%%%%%%%%%%%%%%%%%%%%
\section{A persistence Quillen–McCord theorem for finite persistence posets}\label{sec:persistence_poset}% Sec 3
%%%%%%%%%%%%%%%%%%%%%%%%%%%%%%%%%%%%%%%%%%%%%%%%%%%%%%%%%%%%%%%%%%%%

%%%%%%%%%%%%%%%%%%%%%%%%%%%%%%%%%%%%%%%%%%%%%%%%%%%%%%%%%%%%%%%%%%%%
\subsection{Persistence finite posets (spaces)}% Sec 3.1
%%%%%%%%%%%%%%%%%%%%%%%%%%%%%%%%%%%%%%%%%%%%%%%%%%%%%%%%%%%%%%%%%%%%

A (finite) poset $P$ can be regarded as a topological space with the Alexandroff topology that consists of upper sets.
Here, an upper set (resp. a lower set) of $P$ is a subset closed upward (resp. downward) in $P$.

Conversely, a finite $\T_0$-space $X$ can be regarded as a poset with the specialization order $x \leq y$ defined by $U_x \supset U_y$, where $U_x$ and  $U_y$ are the minimal open neighborhoods of $x$ and $y$ respectively.
Furthermore, a map $f \co X \to Y$ between finite $\T_0$-spaces $X,Y$ is continuous if and only if $f$ is order-preserving.

From this perspective, we identify finite posets with finite $\T_0$-spaces (see \cite{Bar11b, McC66} for details).
Hence, persistence finite posets can be viewed as persistence spaces that we have seen in Section \ref{sec:persistence_spaces}.

\begin{proposition}\label{prop:B_interleaving}
Persistence finite posets $P$ and $Q$ are homotopy commutative $\vep$-interleaved if and only if the induced persistence spaces $\B(P)$ and $\B(Q)$ are homotopy commutative $\vep$-interleaved.
\end{proposition}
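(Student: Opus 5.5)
The plan is to use McCord's weak homotopy equivalences $\mu_P \co \B(P) \to P$. They are natural with respect to order preserving maps, and therefore give an isomorphism in $\Ho(\Top)$ between the persistence objects $\Gamma(\B(P))$ and $\Gamma(P)$. Once this is in place, the statement follows from transitivity of interleavings, Proposition \ref{prop:interleaving}(2). Concretely: if $\Gamma(\B(P))$ is $0$-interleaved with $\Gamma(P)$, and likewise for $Q$, then an $\vep$-interleaving of either pair transports to the other pair with shift $0+\vep+0=\vep$.

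First I recall McCord's construction. For a finite poset $P$, the map $\mu_P \co \B(P) \to P$ sends a point in the interior of the simplex spanned by a chain $p_0<\cdots<p_k$ to its minimum $p_0$. This matches the convention that open sets are upper sets, so that $\mu_P^{-1}(U_p)=\B(U_p)$ is open. The map $\mu_P$ is continuous and a weak homotopy equivalence \cite{McC66}.

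Next I check naturality. Let $f \co P \to P'$ be order preserving and take $\alpha$ in the open simplex of the chain $p_0<\cdots<p_k$. Then $\B(f)(\alpha)$ lies in the open simplex spanned by the chain $\{f(p_0),\dots,f(p_k)\}$, whose minimum is $f(p_0)$. Hence $\mu_{P'}\circ \B(f)=f\circ\mu_P$ holds strictly. Applying this to the structure maps $\varphi^P_{i,j}$ shows that $\mu=\{\mu_{P_i}\}$ is a genuine $0$-map $\B(P)\to P$ of persistence spaces.

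Then I apply $\Gamma$. The image $\Gamma(\mu)$ is a natural transformation whose components are isomorphisms in $\Ho(\Top)$, because weak equivalences are inverted by the localization. Its componentwise inverses are automatically natural, so $\Gamma(\B(P))\cong\Gamma(P)$ as functors $\N\to\Ho(\Top)$; that is, they are $0$-interleaved. The same holds for $Q$.

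Finally, suppose $\Gamma(P)$ and $\Gamma(Q)$ are $\vep$-interleaved by $f,g$. Conjugating by these isomorphisms, I set $f'_i=\Gamma(\mu_{Q_{i+\vep}})^{-1}\circ f_i\circ\Gamma(\mu_{P_i})$, and define $g'$ similarly. Naturality of the isomorphisms shows that $f',g'$ are $\vep$-maps, and that $g'\circ f'$ and $f'\circ g'$ are the structure maps $\Id^{2\vep}$. The converse direction is symmetric. Equivalently, Proposition \ref{prop:interleaving}(2) can be used with $\delta=0$ on both sides.

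The only real point of care is the naturality of $\mu$ together with the convention of upper versus lower open sets. If the minimum convention fails to be continuous, I would switch to the maximum, which is natural in exactly the same way. Everything after that is formal.
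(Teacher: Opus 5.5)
Your proposal is correct and is essentially the paper's argument: McCord's weak equivalences give a natural isomorphism $\Gamma(\B(P))\cong\Gamma(P)$ of functors $\N\to\Ho(\Top)$, i.e.\ a homotopy commutative $0$-interleaving, and the claim follows by composing with the $\vep$-interleaving; your strict-naturality check fills in what the paper leaves implicit. One correction: with open sets being upper sets, $\B(U_p)$ is a closed subcomplex, not an open set, so the minimum map is not continuous (already for $P=\{a<b\}$ the preimage of the open set $U_b=\{b\}$ is just the endpoint $b$); your fallback, sending a point to the maximum of its support, is the right McCord map here, and your naturality argument goes through verbatim.
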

\begin{proof}
The result follows from the fact that $R$ and $\B(R)$ are isomorphic in $\Ho(\Top)$ for any finite poset $R$ by McCord's weak homotopy equivalence $\B(R) \to R$ \cite{McC66}.
This map gives natural isomorphisms $\B(P) \cong P$ and $\B(Q) \cong Q$ as functors $\N \to \Ho(\Top)$.
In other words, $\B(P)$ and $P$ ($\B(Q)$ and $Q$) are homotopy commutative $0$-interleaved.
Hence, $P$ and $Q$ are homotopy commutative $\vep$-interleaved if and only if  $\B(P)$ and $\B(Q)$ are homotopy commutative $\vep$-interleaved.
\end{proof}

\begin{definition}
Let $P$ be a poset. For a point $v \in P$, we use the following special lower sets and upper sets in this paper.
\begin{enumerate}
\item The principal lower set $L_v=\{u \in P \mid u \leq v\}$. Furthermore, $\hat{L}_v=L_v \bs \{v\}$.
\item The principal upper set $U_v=\{u \in P \mid u \geq v\}$. Furthermore, $\hat{U}_v=U_v \bs \{v\}$.
\end{enumerate}
\end{definition}

Note that for a persistence point $v$ of a persistence poset $P$, the persistence posets $L_v, \hat{L}_v, U_v, \hat{U}_v$ are well-defined as persistence subposets of $P$ with the same threshold as $v$.

\begin{definition}
Let $P$ be a persistence finite poset. A persistence point $v$ in $P$ is {\em $\vep$-reducible} if either $\hat{L}_v$ or  $\hat{U}_v$ is weakly $\vep$-contractible (Definition \ref{def:contractible}).
\end{definition}

Note that  $v$ is $\vep$-reducible in $P$ if and only if either the induced persistence space $\B(\hat{L}_v)$ or $\B(\hat{U}_v)$ is weakly $\vep$-contractible by Proposition \ref{prop:B_interleaving}.

\begin{proposition}\label{prop:ep-reducible}
Let $P$ be a persistence finite poset. 
If $v$ is an $\vep$-reducible persistence point in $P$, then the persistence posets $P$ and $P \bs v$ are homotopy commutative $2\vep$-interleaved.
\end{proposition}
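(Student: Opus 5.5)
By Proposition \ref{prop:B_interleaving}, it suffices to show that the persistence CW complexes $\B(P)$ and $\B(P\bs v)$ are homotopy commutative $2\vep$-interleaved. By Remark \ref{rem:CW} this amounts to constructing lax $2\vep$-maps $f\co \B(P\bs v)\to \B(P)$ and $g\co \B(P)\to \B(P\bs v)$ whose composites are homotopic to the shift maps $\Id^{4\vep}$. By the symmetry $P\leftrightarrow P^{\mathrm{op}}$, which preserves order complexes, we may assume that $\hat{L}_v$ is weakly $\vep$-contractible. Write $i=\trh(v)$.

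For $j<i$ we have $(P\bs v)_j=P_j$, and we take identities there. For $j\ge i$, the order complex satisfies
\[
\K(P_j)=\K(P_j\bs v_j)\cup \st(v_j), \qquad \lk(v_j)=\K(\hat{L}_{v_j})\star\K(\hat{U}_{v_j}).
\]
The persistence point condition $\varphi^{-1}(v_{j'})=\{v_j\}$ ensures that the structure map $\varphi_{j,j'}$ satisfies the hypothesis $\varphi^{-1}(\varphi(v))=\{v\}$ of Lemma \ref{lem:contiguity}. We set $f_j$ to be the inclusion followed by the structure map $\varphi_{j,j+2\vep}$.

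The map $g_j\co \B(P_j)\to \B(P_{j+2\vep}\bs v_{j+2\vep})$ is defined by the structure map on $\B(P_j\bs v_j)$, and on $\St(v_j)$ by the map $\tilde\varphi$ of Corollary \ref{cor:simplicial_app} applied to $\varphi=\varphi_{j,j+2\vep}$. This requires showing that $\varphi_{\Lk}\co\Lk(v_j)\to\Lk(v_{j+2\vep})$ is null-homotopic. Weak $\vep$-contractibility of $\hat L_v$, interleaved with $*^i$, gives that $\B(\hat L_{v_j})\to\B(\hat L_{v_{j+2\vep}})$ factors up to homotopy through a point, since the $2\vep$ composite equals the structure map. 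The join $\B(\hat L)\star\B(\hat U)$ then maps null-homotopically, because the join with a null-homotopic factor is null-homotopic: one contracts the $\hat L$ coordinate to a point $u$ and then pushes along the cone $u\star \B(\hat U)$. Hence Corollary \ref{cor:simplicial_app} yields $\tilde\varphi$, agreeing with $\varphi_{\Lk}$ on $\Lk(v_j)$, so $g_j$ is well defined by gluing.

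The relative homotopy $k\circ\tilde\varphi\simeq\varphi_{\St}$ rel $\Lk$ shows that $\iota\circ g_j\simeq \varphi_{j,j+2\vep}$ on $\B(P_j)$, where $\iota$ is the inclusion. This gives $f\circ g\simeq\Id^{4\vep}$, even $\simeq\Id^{2\vep}$ followed by a shift. Conversely, $g\circ f$ restricted to $\B(P\bs v)$ is literally the structure map. Lax naturality, i.e. compatibility of $g_j$ with the structure maps up to homotopy, follows similarly: after composing with $\iota$ everything is homotopic to structure maps. To get it inside $\B(P\bs v)$ itself, we compare $\tilde\varphi$'s at different levels using that the maps to $\Lk(v_{j'})$ agree on $\Lk(v_j)$ and both extend over the cone. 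Two extensions over a cone differ by an element from $\Sigma\Lk$, and that element dies after a further $2\vep$ shift by the same null-homotopy argument. This is where the bound $2\vep$ rather than $\vep$ enters, and it is the reason to take the interleaving parameter $2\vep$: we use one $2\vep$ step to kill $\varphi_{\Lk}$.

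The main obstacle is the homotopy compatibility of the $g_j$, both with each other across levels and the identity $g\circ f\simeq \Id$, which must be verified in $\B(P\bs v)$ and not merely after inclusion into $\B(P)$. I would handle this via the relative homotopy statement of Lemma \ref{lem:contiguity}, choosing the contiguity chains at level $j'$ as images of those at level $j$ under structure maps. Then $\varphi_{j',j''}\circ\tilde\varphi_j$ is itself a valid choice of $\tilde\varphi_{j,j''}$, and different choices agree up to homotopy rel $\Lk$ after one additional shift.
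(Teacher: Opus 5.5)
Your construction is the paper's: $f_j$ is the shifted inclusion, and $g_j$ is glued along the pushout $\B(P)_j=\B(P\bs v)_j\cup_{\Lk(v_j)}\St(v_j)$ from $\varphi^{\B(P\bs v)}_{j,j+2\vep}$ and the extension $\tilde\varphi$ of Corollary~\ref{cor:simplicial_app}. The two composites are checked the same way too. The gap is in the step you single out yourself, lax-naturality of $g$ inside $\B(P\bs v)$: your repair does not give it at shift $2\vep$.

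For $j\to j+1$, the maps $g_{j+1}\circ\varphi_{j,j+1}$ and $\varphi_{j+2\vep,j+2\vep+1}\circ g_j$ agree on $\B(P\bs v)_j$. On $\St(v_j)$ they are two extensions of the same map $\Lk(v_j)\to\Lk(v_{j+2\vep+1})$, so they differ by a class $\delta_j\co S\Lk(v_j)\to\Lk(v_{j+2\vep+1})$. Your argument kills $\delta_j$ only after composing with the null-homotopic structure map $\Lk(v_{j+2\vep+1})\to\Lk(v_{j+4\vep+1})$. What it actually proves is that $\Id^{2\vep}\circ g$ is a lax $4\vep$-map. Paired with $f$ shifted to a $4\vep$-map, this yields a homotopy commutative $4\vep$-interleaving, not a $2\vep$ one.

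Your remark that this is where $2\vep$ enters conflates two shifts. One $2\vep$ step is already consumed in producing $\tilde\varphi$, and the lax-naturality argument spends a second. Choosing the contiguity chains at level $j+1$ as images of those at level $j$ does not help either. A chain at level $j+1$ must be defined on all of $\lk(v_{j+1})$, and its restriction along $\varphi_{j,j+1}$ need not agree, even up to homotopy rel $\Lk(v_j)$, with the pushed-forward chain from level $j$. The discrepancy is exactly $\delta_j$, and you give no reason it can be made to vanish by a choice of chains.

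On this point the paper's proof offers nothing more. It passes from $\ell\circ g_t\simeq\varphi^{\B(P)}_{t,t+2\vep}$ directly to the assertion that $g$ is a lax $2\vep$-map. You are right that this is insufficient: $\delta_j$ dies once $\Lk(v_{j+2\vep+1})$ is coned off in $\B(P)_{j+2\vep+1}$, but need not die in $\B(P\bs v)_{j+2\vep+1}$. To reach $2\vep$ you would need either a coherent choice of the $\tilde\varphi_j$ for which every $\delta_j$ vanishes in $\B(P\bs v)_{j+2\vep+1}$, or a different argument. As written, your proof establishes only the $4\vep$ bound.

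A smaller point: your join argument needs $\hat L_{v_{j+2\vep}}\neq\emptyset$, i.e.\ that $\hat L_v$ has the same threshold as $v$, which the paper asserts. If $\trh(\hat L_v)>\trh(v)+2\vep$, the link map at early levels is just $\B(\hat U_{v_j})\to\B(\hat U_{v_{j+2\vep}})$, which need not be null-homotopic.
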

\begin{proof}
By Proposition \ref{prop:B_interleaving}, it suffices to show that $\B(P)$ and $\B(P \bs v)$ are homotopy commutative $2\vep$-interleaved.

We have a $2\vep$-map $f \co \B(P \bs v) \to \B(P)$ defined as follows: for each $i \in \N$, the map $f_i$ is given by the composition of the structure map $\B(P \bs v)_i \to \B(P \bs v)_{i+2\vep}$ and the inclusion $j \co \B(P \bs v)_{i+2\vep} \hookrightarrow \B(P)_{i+2\vep}$.

On the other hand, a lax $2\vep$-map $g \co \B(P) \to \B(P \bs v)$ is constructed as follows:
In the case of $v_i=\emptyset$, we have 
\[
\B(P \bs v)_i = \B(P_i \bs v_i)=\B(P_i)=\B(P)_i.
\] 
Hence, we define $g_i \co \B(P)_i \to \B(P \bs v)_{i+2\vep}$ as the structure map of $\B(P)$.

We focus on the case of  $v_t \neq \emptyset$, i.e., it consists of a single point $v_t$.
Let us consider the following commutative diagram:
\[
\xymatrix@=2pc{
 \Lk(v)_t \ar[rr] \ar[dd]_{* \simeq} \ar[dr]& & \B(P \bs v)_t \ar'[d][dd]  \ar[dr] \\
& \St(v)_t \ar[rr] \ar[dd]  \ar@{..>}[dl]_{\tilde{\varphi}}& & \B(P)_t \ar[dd] \ar@{..>}[dl]_{g_t}  \\
 \Lk(v)_{t+2\vep}  \ar'[r][rr]  \ar[dr]_{k} & & \B(P \bs v)_{t+2\vep} \ar[dr]^{\ell} \\
& \St(v)_{t+2\vep} \ar[rr] & & \B(P)_{t+2\vep}.
}
\]
Here, the top and bottom squares are pushout diagrams given by inclusions, and the vertical maps are structure maps.
The assumption implies that the structure map 
\[
\varphi^{\Lk(v)}_{t,t+2\vep} \co \Lk(v)_t \ra \Lk(v)_{t+2\vep}
\]
of the link $\Lk(v)=\B(\hat{L}_v) \star \B(\hat{U}_v)$ is homotopic to the constant map onto a vertex.
Corollary \ref{cor:simplicial_app} gives an extension $\tilde{\varphi} \co \St(v)_t \to \Lk(v)_{t+2\vep}$ of $\varphi^{\Lk(v)}_{t,t+2\vep}$ with a homotopy $H$ between $k \circ \tilde{\varphi}$ and $\varphi^{\St(v)}_{t,t+2\vep}$ relative to $\Lk(v)_t$. 
Because the top square is the pushout diagram, we obtain an extension $g_t \co \B(P)_t \to \B(P \bs v)_{t+2\vep}$ of $\varphi^{\B(P \bs v)}_{t,t+2\vep}$ and $\tilde{\varphi}$.

The homotopy $H$ induces $\ell \circ g_t \simeq \varphi^{\B(P)}_{t,t+2\vep}$ and $g \co \B(P) \to \B(P \bs v)$ is a lax $2\vep$-map. 
By chasing the diagram, we have $f \circ g \simeq \Id^{\B(P)}_{4 \vep}$ and $g \circ f = \Id^{\B(P \bs v)}_{4 \vep}$. Thus, $\B(P)$ and $\B(P \bs v)$ are homotopy commutative $2\vep$-interleaved by Remark \ref{rem:CW}.
\end{proof}

%%%%%%%%%%%%%%%%%%%%%%%%%%%%%%%%%%%%%%%%%%%%%%%%%%%%%%%%%%%%%%%%%%%%
\subsection{Persistence Quillen-McCord theorem}% Sec 3.2
%%%%%%%%%%%%%%%%%%%%%%%%%%%%%%%%%%%%%%%%%%%%%%%%%%%%%%%%%%%%%%%%%%%%

\begin{definition}
Let $f \co P \to Q$ be an order-preserving map between posets $P,Q$. 
The {\em mapping cylinder} $\M_f$ of $f$ is a poset defined as follows: The underlying set of $\M_f $ is the coproduct $P \coprod Q$. 
The partial order $x \leq y$ on $\M_f$ is defined by 
\begin{itemize}
\item $x \leq_{P} y$ for $x,y \in P$,
\item  $x \leq_{Q} y$ for $x,y \in Q$,
\item $f(x) \leq_Q y$ for  $x \in P$ and $y \in Q$.
\end{itemize}
\end{definition}

For a map $f \co P \to Q$ between persistence posets, we also define the mapping cylinder $\M_f$ as a persistence poset by $(\M_f)_i=\M_{f_i}$.
The mapping cylinder $\M_f$ is equipped with maps of persistence posets $P \to \M_{f}$ and $Q \to \M_f$ given by component-wise inclusion.
Furthermore, we have a map $r \co M_{f} \to Q$ of persistence posets defined by 
\[
r_i(x)= \begin{cases} f_i(x) & \textrm{if $x \in P_i$}, \\
x & \textrm{if $x \in Q_i$}.
\end{cases}
\]

\begin{lemma}\label{lem:MfQ}
Let $f \co P \to Q$ be a map between persistence finite posets $P$ and $Q$.
The mapping cylinder $\M_f$ and $Q$ are homotopy commutative $0$-interleaved.
\end{lemma}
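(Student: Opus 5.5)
The plan is to show that the retraction $r \co \M_f \to Q$ is, levelwise, a homotopy equivalence of finite spaces, and that it is a strict map of persistence posets. Then Proposition \ref{prop:interleaving}(3), together with Proposition \ref{prop:B_interleaving}, gives the $0$-interleaving. Concretely, I would first check that $r$ is a map of persistence posets.

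\begin{itemize}
\item Each $r_i$ is order preserving. For $x \le y$ with $x \in P_i$ and $y \in Q_i$, we have $r_i(x) = f_i(x) \le y = r_i(y)$ by the definition of the order on $\M_{f_i}$. The cases where both elements lie in $P_i$, or both in $Q_i$, are clear.
\item Naturality $\varphi^Q_{i,j} \circ r_i = r_j \circ \varphi^{\M_f}_{i,j}$ holds on the nose. The structure maps of $\M_f$ are induced by those of $P$ and $Q$, and $f$ is natural.
\end{itemize}
So $r$ is an honest $0$-map. We also have the inclusion $\iota \co Q \to \M_f$, which is likewise a $0$-map, and $r \circ \iota = \id_Q$ strictly.

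The key step is to show that $\iota_i \circ r_i \simeq \id_{\M_{f_i}}$ for each $i$. I would do this with the standard finite-space argument: comparable maps are homotopic. For every $x \in \M_{f_i}$ we have $x \le \iota_i r_i(x)$. If $x \in P_i$, this is $x \le f_i(x)$, which holds in the cylinder. If $x \in Q_i$, it is equality. Hence $\id \le \iota_i r_i$ pointwise.

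For order-preserving maps $g \le h$ between finite $T_0$-spaces, the map $H \co X \times [0,1] \to Y$ given by $H(x,t) = g(x)$ for $t < 1$ and $H(x,1) = h(x)$ is continuous. This uses the Alexandroff topology with upper sets open. So $\iota_i r_i \simeq \id$, and $r_i$ is a homotopy equivalence, in particular a weak homotopy equivalence.

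I would double-check the continuity direction (upper sets open, $g \le h$), since this is the only point that needs care. If it fails, the alternative is to pass to order complexes: there $\B(g)$ and $\B(h)$ are contiguous, because $g(\sigma) \cup h(\sigma)$ is a chain for any chain $\sigma$, and so they are homotopic by Lemma 3.5.2 of \cite{Spa66}.

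Finally, I would conclude the interleaving. Applying $\Gamma$, the maps $\Gamma(r) \co \Gamma(\M_f) \to \Gamma(Q)$ form a natural transformation that is a levelwise isomorphism in $\Ho(\Top)$, hence a natural isomorphism. Its inverse is natural automatically. This is exactly a $0$-interleaving in $\Ho(\Top)$, so $\dhc(\M_f, Q) = 0$.

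Alternatively, one can phrase this via $\B$ and Proposition \ref{prop:interleaving}(3), using $\B(r_i)$ as a compatible family of homotopy equivalences between CW complexes. I expect no real obstacle; the only subtle point is the pointwise-comparability homotopy argument above.
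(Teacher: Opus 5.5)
Your strategy is the paper's. The retraction $r$ and the inclusion $\iota\co Q\to\M_f$ are strict maps of persistence posets with $r_i\iota_i=\id_{Q_i}$ and $\iota_i r_i\simeq\id$, so $\Gamma(r)$ is a natural isomorphism. The paper simply asserts $\iota_i r_i\simeq\id$. You justify it via $\id\le\iota_i r_i$, which is the right idea, but both of your justifications are incorrect as written.

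\textbf{The primary homotopy.} With upper sets open, the formula $H(x,t)=g(x)$ for $t<1$, $H(x,1)=h(x)$ (where $g\le h$) is not continuous. Take $P=\{a\}$ and $Q=\{p\}$. Then $\M_f=\{a<p\}$ has open sets $\emptyset,\{p\},\{a,p\}$, and
\[
H^{-1}(\{p\})=(\{p\}\times[0,1])\cup\{(a,1)\},
\]
which is not open at $(a,1)$. That formula is correct in the convention where down-sets are open. Here it must be reversed: set $H(x,t)=h(x)$ for $t<1$ and $H(x,1)=g(x)$. Then
\[
H^{-1}(U_y)=\bigl(h^{-1}(U_y)\times[0,1)\bigr)\cup\bigl(g^{-1}(U_y)\times[0,1]\bigr)
\]
is open, because $g\le h$ gives $g^{-1}(U_y)\subseteq h^{-1}(U_y)$.

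\textbf{The fallback.} It is also false that $g\le h$ makes $\K(g)$ and $\K(h)$ contiguous, even for $\id\le\iota r$. Take $P=\{a<b\}$, $Q=\{p<q\}$, $f(a)=p$, $f(b)=q$. For the chain $\{a,b\}$ of $\M_f$, the union with its image is $\{a,b,p,q\}$, but $b$ and $p$ are incomparable in $\M_f$. What does hold is that the two maps lie in the same contiguity class. List the points along a linear extension and change $g$ to $h$ one point at a time, starting from the top. Each intermediate map is order preserving, and consecutive ones are contiguous.

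With either correction, the rest of your argument goes through unchanged.
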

\begin{proof}
We can verify that the map $r \co \M_f \to Q$ and the inclusion $k \co Q \to \M_f$ exhibit $\M_f \simeq Q$. 
Indeed $r_i \circ k_i = \id_{Q_i}$ and $k_i \circ r_i \simeq \id_{(\M_f)_i}$ for each $i \in \N$. Hence,  $\M_f$ and $Q$ are isomorphic in $\Ho(\Top)$.
\end{proof}

\begin{definition}
For a persistence finite poset $P$, the {\em cardinality} $|P|$ is defined as
\[
|P| = \max \{ |P_i| \mid i \in \N\},
\]
if it exists.
\end{definition}

A finite type filtration $P$ of finite posets always has cardinality $|P|=|P_i|$ for sufficiently large $i \in \N$.

\begin{theorem}\label{th:main}
Let $f \co P \to Q$ be a map between persistence finite posets $P$ and $Q$, where $Q$ is a finite type filtration. 
We assume that one of the following conditions holds:
\begin{enumerate}
\item $f^{-1}(L_v)$ is weakly $\vep$-contractible for each persistence point $v$ in $Q$.
\item $f^{-1}(U_v)$ is weakly $\vep$-contractible for each persistence point $v$ in $Q$.
\end{enumerate}
Then $P$ and $Q$ are homotopy commutative $2\vep |Q|$-interleaved.
\end{theorem}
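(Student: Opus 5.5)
We follow the classical mapping-cylinder reduction of \cite{Bar11a, Guz}, with Proposition \ref{prop:ep-reducible} as the basic step. We treat condition (1); condition (2) is the same argument with the opposite order on $\M_f$ (equivalently, apply the proof to the opposite posets, which have the same classifying spaces).

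By Lemma \ref{lem:MfQ}, $\M_f$ and $Q$ are homotopy commutative $0$-interleaved. By Proposition \ref{prop:interleaving}(2), it therefore suffices to show that $\M_f$ and $P$ are homotopy commutative $2\vep|Q|$-interleaved, where $P$ sits in $\M_f$ as the component-wise subposet $P_i \subset \M_{f_i}$.

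Since $Q$ is a finite type filtration, Example \ref{ex:point} and the subsequent remark decompose $Q$ into its persistence points. Their number is $|Q|$, the stable cardinality. We order them as $v^1,\dots,v^n$ with $n=|Q|$ so that they can be removed from $\M_f$ one at a time, each being maximal among the remaining points of $Q$. Concretely, we choose a linear extension at a large stable index $N$ and remove in decreasing order. Since $Q$ is a filtration, the order at earlier indices is the restriction of the order at stage $N$, so this ordering is compatible at every index. Set $M^0=\M_f$ and $M^k=M^{k-1}\bs v^k$, so that $M^n=P$.

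Each $v^k$ is a persistence point of $M^{k-1}$: we have $|v^k_i|\le1$, and the preimage condition holds because the structure maps of $\M_f$ restrict to those of $Q$ on the $Q$-part. In $M^{k-1}$ the points of $Q$ above $v^k$ have already been removed. Points of $P$ are never above points of $Q$, so $\hat{U}_{v^k}$ is empty. The key identification is
\[
\hat{L}_{v^k}=\{x\in P \mid f(x)\le v^k\}\cup\{q\in Q\setminus\{v^{1},\dots,v^{k}\} \mid q< v^k\}.
\]
The second set is just $\hat L_{v^k}$ in $Q$, because all removed points are not below $v^k$. So $\hat{L}_{v^k}$ in $M^{k-1}$ is $\hat L_{v^k}$ computed in $\M_f$, which is $f^{-1}(L_{v^k})\cup \hat{L}^Q_{v^k}$. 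This poset deformation retracts, level-wise and naturally in $i$ via the comparison $x\le f(x)$ for $x\in P$, onto $f^{-1}(L_{v^k})$. More precisely, the inclusion $f^{-1}(L_{v^k})\hookrightarrow \hat{L}_{v^k}$ is a homotopy equivalence by the standard cylinder argument (the part over $\hat L^Q_{v^k}$ is the mapping cylinder of $f$ restricted there, so it collapses onto $f^{-1}(L_{v^k})$). Hence it is homotopy commutative $0$-interleaved with $f^{-1}(L_{v^k})$, with the same threshold, and is therefore weakly $\vep$-contractible by hypothesis (1). Thus $v^k$ is $\vep$-reducible in $M^{k-1}$.

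Proposition \ref{prop:ep-reducible} then gives that $M^{k-1}$ and $M^k$ are homotopy commutative $2\vep$-interleaved. Chaining the $n$ steps with Proposition \ref{prop:interleaving}(2) shows that $\M_f$ and $P$ are $2\vep|Q|$-interleaved, and combining with Lemma \ref{lem:MfQ} gives the theorem.

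The main obstacle is this third step. One must check that the removal order is consistent across all indices $i$, so that the $M^k$ are genuine persistence posets and each $v^k$ is a persistence point. One must also check that the collapse of $\hat L_{v^k}$ onto $f^{-1}(L_{v^k})$ is compatible with structure maps and preserves thresholds. I would first verify the threshold issue: $\trh(\hat L_{v^k})=\trh(v^k)$, and the threshold of $f^{-1}(L_{v^k})$ is the one intended in the hypothesis. If they disagree, I would read hypothesis (1) with the threshold of $v^k$, as the note after the definition of $L_v$ suggests.
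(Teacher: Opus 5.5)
There is a genuine gap in your third step, and it comes from the removal order. Because you remove points of $Q$ from the top down, $\hat L_{v^k}$ in $M^{k-1}$ still contains $\hat L^Q_{v^k}$. Your claim that $f^{-1}(L_{v^k})\hookrightarrow f^{-1}(L_{v^k})\cup\hat L^Q_{v^k}$ is a homotopy equivalence does not follow from the cylinder argument. A mapping cylinder collapses onto its \emph{target} (that is Lemma~\ref{lem:MfQ}), not onto its source.

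Let $f'\co f^{-1}(\hat L^Q_{v^k})\to\hat L^Q_{v^k}$ be the restriction of $f$. Every chain meeting $\hat L^Q_{v^k}$ has its $P$-part in $f^{-1}(\hat L^Q_{v^k})$. Hence the order complex of $\hat L_{v^k}$ is $\K(f^{-1}(L_{v^k}))\cup_{\K(f^{-1}(\hat L^Q_{v^k}))}\K(\M_{f'})$. This is a homotopy pushout of $f^{-1}(L_{v^k})\leftarrow f^{-1}(\hat L^Q_{v^k})\xrightarrow{f'}\hat L^Q_{v^k}$. It agrees with $f^{-1}(L_{v^k})$ when $f'$ is an equivalence. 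Classically that holds, by Theorem~\ref{th:QM} applied to $f'$. In the persistence setting $f'$ is only an approximate equivalence, so the claim fails.

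Here is a concrete counterexample.
\begin{itemize}
\item Take $\vep=1$ and $Q_i=\{a<v\}$ for all $i$.
\item Let $P_0=P_1=\{p,q,w\}$ with $p,q<w$, let $P_2=P_1\cup\{r\}$ with $p,q<r$, and let $P_i=P_2\cup\{z\}$ with $z$ a maximum for $i\geq 3$. The structure maps are inclusions.
\item Set $f(p)=f(q)=f(r)=a$ and $f(w)=f(z)=v$.
\end{itemize}
Then $f^{-1}(L_a)$ has homotopy types $S^0,S^0,\ast,\ast,\dots$, and $f^{-1}(L_v)=P$ has homotopy types $\ast,\ast,S^1,\ast,\dots$. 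Both are weakly $1$-contractible.

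However, $\hat L_v$ in $\M_f$ is $P\cup\{a\}$ with $p,q,r<a$. At time $0$ it is the four-point circle $\{p,q\}<\{w,a\}$. At time $2$ its order complex collapses onto that circle, so $\varphi_{0,2}$ is a homotopy equivalence $S^1\to S^1$ and $\hat L_v$ is not weakly $1$-contractible. Since $\hat U_v=\emptyset$, the point $v=v^1$ is not $1$-reducible in $\M_f$, and your chain already breaks at $k=1$.

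The repair is to reverse the order, which is what the paper does. Remove a minimal point of $Q_\infty$, then a minimal point of what remains, and so on. Then every point of $Q$ below $v^k$ is already gone, so $\hat L_{v^k}$ in $M^{k-1}$ is literally $f^{-1}(L_{v^k})$. Hypothesis (1) then applies directly, with no homotopy argument.

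The set $\hat U_{v^k}$ (the remaining points of $Q$ above $v^k$) may now be nonempty, but this is harmless. $\vep$-reducibility needs only one of $\hat L_{v^k}$ and $\hat U_{v^k}$. The link is $\B(\hat L_{v^k})\star\B(\hat U_{v^k})$, and the join of a null-homotopic map with any map is null-homotopic; this is exactly what Proposition~\ref{prop:ep-reducible} uses. With this change, the rest of your outline matches the paper's proof: the persistence-point check, the chaining via Proposition~\ref{prop:interleaving}, and Lemma~\ref{lem:MfQ}.

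One small correction to your compatibility claim. Injective order-preserving maps need not reflect the order, so the order at earlier indices is only contained in the restriction of the order at stage $N$, not equal to it. Strict inequalities are still preserved, and that is all you need for minimality at stage $N$ to give minimality at every earlier stage.
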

\begin{proof}
We prove only case (1), since (2) can be shown dually.
Let $Q_{\infty}$ denote $Q_i$ for a sufficiently large $i$ satisfying $Q_i \cong Q_j$ for any $j>i$.
Recall that a point $v \in Q_{\infty}$ can be regarded as a persistence point in $Q$ in Example \ref{ex:point}.

First, we choose a minimal point $v$ in $Q_{\infty}$. Note that each $v_s \subset Q_s$ consists of a minimal point in $Q_s$ or empty.
The lower set $\hat{L}_v$ in $\M_f$ agrees with $f^{-1}(L^{Q}_v)$. 
Hence, $v$ is $\vep$-reducible in $\M_f$.
Proposition \ref{prop:ep-reducible} implies that $\M_f$ and $\M_f \bs v$ are  homotopy commutative $2\vep$-interleaved.

Next, we choose a minimal point $u \in Q_{\infty} \bs v$. Again, the lower-set $\hat{L}_u$ in $\M_f \bs v$ agrees with $f^{-1}(L_u^Q)$ and $u$ is $\vep$-reducible in $\M_f \bs v$.
Hence, $\M_f \bs v$ and $\M_f \bs \{v,u\}$ are homotopy commutative $2\vep$-interleaved.

By repeating this process, Proposition \ref{prop:interleaving} implies that $\M_f$ and $P$ are homotopy commutative $2\vep|Q|$-interleaved.
The result follows from Lemma \ref{lem:MfQ} and Propositions \ref{prop:B_interleaving} and \ref{prop:interleaving}.
\end{proof}

For a PID $R$, the singular homology functor 
	\[
	H_*=H_*(-,R) \co \Top \ra R\hyphenmod
	\]
induces $\Ho(\Top) \to R\hyphenmod$ because homology sends weak homotopy equivalences to isomorphisms.
The following proposition follows from Lemma \ref{lem:functor}.

\begin{proposition}\label{prop:homology}
If two persistence spaces $X$ and $Y$ are homotopy commutative $\vep$-interleaved, then the persistent homologies $H_*(X,R)$ and $H_*(Y,R)$ are $\vep$-interleaved.
\end{proposition}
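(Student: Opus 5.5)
The plan is to factor singular homology through the homotopy category and then apply Lemma \ref{lem:functor}. First, recall that singular homology with coefficients in $R$ sends weak homotopy equivalences to isomorphisms: this is the standard fact that a weak equivalence induces isomorphisms on all singular homology groups, for example via CW approximation. By the universal property of the localization $\Gamma \co \Top \to \Ho(\Top)$, there is therefore a unique functor
\[
\bar{H}_* \co \Ho(\Top) \ra R\hyphenmod
\]
with $\bar{H}_* \circ \Gamma = H_*$. The same holds for each $H_j$ separately, or for the graded functor into graded modules.

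Now suppose $X$ and $Y$ are homotopy commutative $\vep$-interleaved. By definition, $\Gamma(X)$ and $\Gamma(Y)$ are $\vep$-interleaved persistence objects in $\Ho(\Top)$. Apply Lemma \ref{lem:functor} with $F=\bar{H}_*$, or with $F=\bar{H}_j$ for each $j$. This shows that $\bar{H}_*(\Gamma(X))$ and $\bar{H}_*(\Gamma(Y))$ are $\vep$-interleaved in $R\hyphenmod$.

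It remains to identify these persistence modules with $H_*(X,R)$ and $H_*(Y,R)$. Since $\Gamma$ is the identity on objects and $\bar{H}_*\circ\Gamma=H_*$ on morphisms, both the values and the structure maps of $\bar{H}_*(\Gamma(X))$ agree with those of $H_*(X)$, and likewise for $Y$. This gives the claim, and also the inequality $\di(H_*(X),H_*(Y)) \leq \dhc(X,Y)$.

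The only step with any real content is the factorization through $\Ho(\Top)$, that is, the invariance of singular homology under weak equivalences. This is classical and will be cited rather than reproved. Everything else is formal.
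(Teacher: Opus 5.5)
Your proposal is correct and matches the paper's argument: singular homology inverts weak homotopy equivalences, so it factors through $\Gamma\co\Top\to\Ho(\Top)$, and Lemma~\ref{lem:functor} applied to the induced functor gives the $\vep$-interleaving. Your explicit identification of $\bar{H}_*(\Gamma(X))$ with $H_*(X,R)$ only spells out a step the paper leaves implicit.
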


In the previous work \cite{Guz}, a homological version of our Theorem \ref{th:main} was established. 
It states that if the homotopy fibers of a map $P \to Q$ between finite type persistence finite posets are $\vep$-acyclic, then the persistent homologies $H_*(P)$ and $H_*(Q)$ are $4\vep |Q|$-interleaved.

Theorem \ref{th:main} and Proposition \ref{prop:homology} improve the corresponding result on homological interleavings in \cite{Guz}.

\begin{corollary}\label{cor:di_H}
Let $R$ be a PID and let $f \co P \to Q$ be a map between persistence finite posets $P$ and $Q$, where $Q$ is a finite type filtration. 
We assume that one of the following conditions holds:
\begin{enumerate}
\item $f^{-1}(L_v)$ is weakly $\vep$-contractible for each persistence point $v$ in $Q$.
\item $f^{-1}(U_v)$ is weakly $\vep$-contractible for each persistence point $v$ in $Q$.
\end{enumerate}
Then $H_*(P,R)$ and $H_*(Q,R)$ are $2\vep |Q|$-interleaved.
\end{corollary}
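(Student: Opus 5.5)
The plan is to combine Theorem \ref{th:main} with Proposition \ref{prop:homology}, which in turn rests on Lemma \ref{lem:functor}.

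First, Theorem \ref{th:main} applies directly. Its hypotheses are exactly those of the present statement: $Q$ is a finite type filtration, and either $f^{-1}(L_v)$ or $f^{-1}(U_v)$ is weakly $\vep$-contractible for every persistence point $v$ of $Q$. So $P$ and $Q$, viewed as persistence spaces (finite $\T_0$-spaces), are homotopy commutative $2\vep|Q|$-interleaved. By definition, this means $\Gamma(P)$ and $\Gamma(Q)$ are $2\vep|Q|$-interleaved in $\Ho(\Top)$.

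Second, I would factor the homology functor through the localization. Singular homology $H_j(-,R)\co \Top \to R\hyphenmod$ sends weak homotopy equivalences to isomorphisms. By the universal property of $\Gamma$, there is therefore a unique functor $\bar H_j\co \Ho(\Top)\to R\hyphenmod$ with $\bar H_j\circ\Gamma = H_j$. Applying Lemma \ref{lem:functor} to $\bar H_j$ shows that $\bar H_j\Gamma(P)=H_j(P,R)$ and $\bar H_j\Gamma(Q)=H_j(Q,R)$ are $2\vep|Q|$-interleaved as persistence modules, for each $j$. This is precisely Proposition \ref{prop:homology}. Taking all degrees together, or applying the graded functor $H_*$ in the same way, gives the conclusion.

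There is essentially no obstacle here. The only point that needs care is that $H_*(P,R)$ means the singular homology of the finite $\T_0$-spaces $P_i$, not the homology of the order complexes. The two agree via McCord's weak equivalence $\B(P_i)\to P_i$, as used in Proposition \ref{prop:B_interleaving}, so either reading yields the same persistence module up to isomorphism. The interleaving then transfers without change.
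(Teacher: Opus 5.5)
Your proposal is correct and matches the paper's proof: the paper also combines Theorem \ref{th:main} with Proposition \ref{prop:homology}, and the latter is Lemma \ref{lem:functor} applied to the functor $\Ho(\Top)\to R\hyphenmod$ induced by singular homology. Your explicit factorization $\bar H_j\circ\Gamma = H_j$ and your remark that McCord's weak equivalence identifies the homology of the finite spaces with that of the order complexes spell out details the paper leaves implicit.
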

\begin{proof}
The result follows from Theorem \ref{th:main} and Lemma \ref{prop:homology}.
\end{proof}

%%%%%%%%%%%%%%%%%%%%%%%%%%%%%%%%%%%%%%%%%%%%%%%%%%%%%%%%%%%%%%%%%%%%
\subsection*{Concluding remarks and future directions}
%%%%%%%%%%%%%%%%%%%%%%%%%%%%%%%%%%%%%%%%%%%%%%%%%%%%%%%%%%%%%%%%%%%%

In this paper, we established a persistence version of the Quillen–McCord theorem for finite persistence posets.
Our main result (Theorem \ref{th:main}) implies that if there is a map $P \to Q$ from a persistence poset $P$ to a finite type filtration $Q$ with weakly $\vep$-contractible homotopy fibers, then we have
\[
d_{\mathrm{HC}}(P,Q) \leq 2\vep|Q|.
\]

Based on the present results, we would like to outline several directions for future investigation.

\begin{enumerate}
\item It is worthwhile to examine the sharpness of the estimates for $d_{\mathrm{HC}}$ obtained in this work. 
Although we have provided an upper bound for $d_{\mathrm{HC}}$, it remains to determine how tight this bound is. 
To this end, one should investigate concrete examples of persistence finite posets $P,Q$, and establish methods for computing $d_{\mathrm{HC}}$.
\item For persistence spaces $X,Y$, Proposition \ref{prop:homology} induces the following inequality:
\[
d_{\mathrm{I}}(H_*(X),H_*(Y)) \leq d_{\mathrm{HC}}(X,Y).
\]
The difference between $d_{\mathrm{I}}$ and $d_{\mathrm{HC}}$ is of interest as a way to explicitly capture the change in information after passing to homology. 

Since computations in general settings are expected to be difficult, it may be worthwhile to consider special cases—for instance, when $X$ and $Y$ are finite spaces (posets) satisfying the assumptions of Theorem \ref{th:main}—where this difference can be computed.
\item For a given finite poset (space) $P$, it is known that there exists, up to isomorphism, a unique minimal finite poset $P'$ that has the same homotopy type as $P$ \cite{Sto66}.
From the viewpoint of applications to TDA, it is of particular interest to formulate and construct a minimal model (in an appropriate sense) for a given persistence finite poset.
In fact, in our main theorem (Theorem \ref{th:main}), the upper bound depends on the size of $Q$; therefore, it may be possible to improve the estimate by replacing $Q$ with a smaller persistence finite poset $Q'$.

Furthermore, for a given persistence finite poset $Q$ and $\vep \geq 0$, it is of practical interest to construct a smaller model $P$ than $Q$ satisfying the conditions of Theorem \ref{th:main}.

\end{enumerate}

%%%%%%%%%%%%%%%%%%%%%%%%%%%%%%%%%%%%%% Bib

\end{document}